\newtheorem{thm}{Theorem}
\newtheorem{lem}[thm]{Lemma}
\newtheorem{defi}[thm]{Definition}
\newcommand{\pd}[2]{\frac{\partial #1}{\partial #2}}
\newtheorem{nota}[thm]{Remark}
\title{On the finite element method for a nonlocal degenerate parabolic problem }
\author{Rui M.P. Almeida \thanks{Department of Mathematics, Faculty of Science,
University of Beira Interior, http://www.mat.ubi.pt/\~{}ralmeida, email: ralmeida@ubi.pt}
\and Stanislav N. Antontsev \thanks{Center for Mathematics and Fundamental Applications, Faculty of Science,
University of Lisbon, http://cmaf.ptmat.fc.ul.pt, email: anton@ptmat.fc.ul.pt}
\and Jos\'{e} C.M. Duque \thanks{Department of Mathematics, Faculty of Science,
University of Beira Interior, http://www.mat.ubi.pt/\~{}jduque, email: jduque@ubi.pt}}
\date{\today}
\begin{document}
\maketitle

\begin{abstract}
The aim of this paper is the numerical study of a class  of nonlinear nonlocal degenerate parabolic equations. The convergence and error bounds of the solutions are proved for a linearized Crank-Nicolson-Galerkin finite element method with polynomial approximations of degree $k\geq 1$. Some explicit solutions are obtained and used to test the implementation of the method in Matlab environment.

\textbf{keywords}: nonlocal; degenerate; parabolic; PDE; finite element.
\end{abstract}

\section{Introduction}

In this work, we study parabolic problems with nonlocal nonlinearity of the
following type
\begin{equation}  \label{prob}
\left\{
\begin{array}{l}
\displaystyle u_t-\left( \int_{\Omega}u^2(x,t)dx\right)^{\gamma} \Delta u=f\left(x,t\right)\,, \quad (x,t)\in \Omega\times ]0,T],\\
\displaystyle u\left(x,t\right) =0\,,\quad (x,t)\in \partial\Omega\times ]0,T], \\
\displaystyle u(x,0)=u_{0}(x)\,, \quad x\in \Omega, \\
\end{array}
\right. \,
\end{equation}
where $\Omega$ is a bounded open domain in $\mathbbm{R}^d$, $d=1,2,3$, $\gamma$ is a real constant, and $f$ and $u_0$ are continuous integrable functions.\\
In 1996, Chipot and Lovat \cite{CL97} proposed the equation
\begin{equation}\label{eq_cl}
u_t-a(\int_{\Omega} u \ dx)\Delta u =f
\end{equation}
to model the density of a population subject to spreading or heat propagation, and proved the existence and uniqueness of weak solutions. Since then, the existence, uniqueness and asymptotic behaviour of weak and strong solutions of parabolic equations and systems with nonlocal diffusion terms, have been widely studied (see, for example, \cite{ZC05,DAAFppa,RACFppa} and their references).\\
Concerning the numerical treatment of this equations we refer to some relevant works. Ackleh and Ke \cite{AK00} studied the problem
$$\left\{\begin{array}{l}
u_t=\frac{1}{a(\int_{\Omega}u\ dx)}\Delta u+f(u)\,, \quad (x,t)\in\Omega\times ]0,T],\\
u(x,t)=0\,, \quad (x,t)\in\partial \Omega\times ]0,T],\\
u(x,0)=u_0(x)\,, \quad x\in\overline \Omega,
\end{array}\right.$$
with $a(\xi)>0$ for all $\xi\neq0$, $a(0)\geq 0$ and $f$ Lipschitz-continuous satisfying $f(0)=0$.
In addition to the proof of the existence and uniqueness of solutions to this problem and the establishment of conditions on $u_0$ for the extinction in finite time and for the persistence of solutions, they also made some numerical simulations with a finite difference scheme in one dimension and a finite volume discretization in two space dimensions. In 2009, Bendahmane and Sepulveda \cite{BS09} used the model
\begin{equation}\label{eq_bs}
	\begin{cases}
		&\displaystyle  (u_1)_t-a_1\left(\int_\Omega u_1\ dx\right) \Delta u_1= -\sigma(u_1,u_2,u_3)-\mu u_1,\\
		&\displaystyle (u_2)_t-a_2\left(\int_\Omega u_2\ dx\right) \Delta u_2 = \sigma(u_1,u_2,u_3)-\gamma u_2-\mu u_2,\\
		&\displaystyle (u_3)_t-a_3\left(\int_\Omega u_3\ dx\right)\Delta u_3= \gamma u_2,
	\end{cases}	
\end{equation}
to investigate the propagation of an epidemic disease, in a physical domain $\Omega \subset \mathbbm{R}^{d}$ $(d=1,2,3)$. They established the existence of discrete solutions to a finite volume scheme and its
convergence to the weak solution of the PDE.
In \cite{DAAFppb}, the authors proved the optimal order of convergence for a linearized Euler-Galerkin
finite element method for a nonlocal system with absorbtion,
\begin{equation}\label{eq_daaf}
\left\{\begin{array}{l}
u_t-a_1(l_1(u),l_2(v))\Delta u+\lambda_1|u|^{p-2}u=f_1(x,t)\,, \quad (x,t)\in\Omega\times ]0,T],\\
v_t-a_2(l_1(u),l_2(v))\Delta v+\lambda_2|v|^{p-2}v=f_2(x,t)\,, \quad (x,t)\in \Omega\times ]0,T],\\
\end{array}\right.
\end{equation}
and presented some numerical results.
In \cite{RACFppb}, Robalo et al. obtained approximate numerical solutions for a nonlocal reaction-diffusion system, in a domain with moving boundaries, of the type
\begin{equation}\label{eq_racf}
\left\{\begin{array}{l}
u_t-a_1\left(\int_{\Omega(t)} v\ dx\right) u_{xx}=f_1(x,t)\,, \quad (x,t)\in\hat Q,\\
v_t-a_2\left(\int_{\Omega(t)} u\ dx\right) v_{xx}=f_2(x,t)\,, \quad (x,t)\in\hat Q,\\
\end{array}\right.
\end{equation}
where $\hat Q=\{(x,t)\in\mathbbm{R}^2:\alpha(t)<x<\beta(t), 0<t<T\}$, with a Matlab code based on the moving finite element method (MFEM) with high degree local approximations.
Almeida et al. \cite{ADFRpp,ADFRppb}, established the convergence and error bounds of the fully discrete solutions for a class of nonlinear equations and for systems of reaction-diffusion nonlocal type with moving boundaries, using a linearized Crank-Nicolson-Galerkin finite element method with polynomial approximations of any degree.\\
In this paper, we analyse a different diffusion term, dependent on the $L_2$-norm of the solution. In most of the previous papers, it is assumed that the diffusion term is bounded, with $0<m\leq a(s)\leq M<\infty$, $s\in\mathbbm{R}$, and so the problem is always nondegenerate. Here, we study a case were the diffusion term could be zero or infinity.
Problem (\ref{prob}) was studied in \cite{AAD14ppb}, where the authors proved the existence of weak solutions for $t\in[0,T]$ and the existence of a positive instant $t^*$ such that these solutions are unique and classical for $t\in[0,t^*]$.  In \cite{AAD14ppb}, the asymptotic behaviour of the solutions as time increases, was also studied.\\
 This work is concerned with the study of the convergence of a total discrete solution using a Crank-Nicolson-Galerkin finite element method and the use of this method to analyse the behaviour of the weak solutions. To the best of our knowledge, these results are new for nonlocal reaction-diffusion equations with this type of diffusion term.
The remaining of this paper is organized as follows. In Section 2, we formulate the problem and recall some useful definitions and lemmas. In Section 3, we define and prove the convergence of the semidiscrete
solution. Section 4 is devoted to the definition and proof of the convergence of a fully
discrete solution. In Section 5, we obtain some explicit solutions and analyse their behaviour, and then we use the deduced explicit solutions to simulate some examples in Section 6. Finally, in Section 7, we draw some conclusions.

\section{Statement of the problem}
Let $\Omega$ be a bounded open domain in $\mathbbm{R}^d$, $d=1,2,3$, with Lipschitz-continuous boundary $\partial\Omega$, and $T$ an arbitrary positive finite instant. We consider the problem of finding the function $u(x,t)$ which satisfies the following conditions
\begin{equation}  \label{probi}
\left\{
\begin{array}{l}
\displaystyle u_t- a(u)\Delta u=f\left(x,t\right)\,, \quad (x,t)\in \Omega\times ]0,T],\\
\displaystyle u\left(x,t\right) =0\,,\quad (x,t)\in \partial\Omega\times ]0,T], \\
\displaystyle u(x,0)=u_{0}(x)\,, \quad x\in \Omega, \\
\end{array}
\right. \,
\end{equation}
where $a(u)= \left(\int_{\Omega}u^2(x,t)dx\right)^{\gamma}$ with $\gamma\in \mathbbm{R}$ and $f$ and $u_0$ are continuous integrable functions.\\
In what follows, $(\cdot, \cdot)$ and $\|\cdot\|$ denote, respectively, the inner product and the
norm in $L_2(\Omega)$, and $C$ represents a constant, but not always the same value. \\
The definition of a weak solution to this problem is as follows:
\begin{defi}[Weak solution]\label{fraca}
We say that the function $u$ is
a weak solution of Problem (\ref{probi}) if
\begin{equation}
u\in L_{2}(0,T;H_{0}^{1}(\Omega)\cap H^2(\Omega)),\frac{\partial u}{\partial t}\in L_{2}(0,T;L_{2}(\Omega)),  \label{regx}
\end{equation}
the equality
\begin{equation}\label{fracav}
(u_t,w)+a(u)(\nabla u,\nabla w)=(f,w)
\end{equation}
is valid for all $w\in H_{0}^{1}(\Omega)$ and $t\in ]0,T[$, and
\begin{equation}\label{condi}
u(x,0)=u_{0}(x),\quad x\in \Omega.
\end{equation}
\end{defi}
The existence and uniqueness of a weak solution, in the sense of this definition, were proved in \cite{AAD14ppb}, and follow mainly from the lemmas below. These lemmas prove the nondegeneracy and the Lipschitz-continuity of the diffusion term and will be needed in the proofs of the theorems in the following sections.
\begin{lem}\label{lema1_t*}
Suppose that $\gamma>0$. If $u_0\in H_0^1(\Omega)$, $f\in L_2(0,T;H_0^1(\Omega))$ and $\int_{\Omega} u_0\ dx>0$, then there exists a $t^*>0$ such that $a(u)\geq m>0$ for $t\in[0,t^*]$, where $u$ is a weak solution of Problem (\ref{probi}).
\end{lem}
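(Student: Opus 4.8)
The plan is to reduce the assertion to a continuity argument for the mass functional $t\mapsto\int_\Omega u(x,t)\,dx$, combined with the Cauchy--Schwarz inequality and the hypothesis $\gamma>0$. Since $a(u)=\left(\int_\Omega u^2(x,t)\,dx\right)^{\gamma}$ with $\gamma>0$, it is enough to produce a constant $c>0$ and an instant $t^*>0$ with $\int_\Omega u^2(x,t)\,dx\geq c$ for all $t\in[0,t^*]$; then $a(u)\geq c^{\gamma}=:m>0$ on that interval.

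I would not estimate $\int_\Omega u^2$ directly, but instead work with the (signed) quantity $M(t):=\int_\Omega u(x,t)\,dx$, since this is where the data enter through the hypothesis $M(0)=\int_\Omega u_0\,dx>0$. By Cauchy--Schwarz,
\[
M(t)^2\ \leq\ |\Omega|\int_\Omega u^2(x,t)\,dx,\qquad\text{hence}\qquad \int_\Omega u^2(x,t)\,dx\ \geq\ \frac{M(t)^2}{|\Omega|},
\]
valid irrespective of the sign of $u$. So everything reduces to showing $M(t)\geq\tfrac12 M(0)>0$ on some $[0,t^*]$, and this in turn is just continuity of $M$ at $t=0$: from $|M(t)-M(0)|\leq|\Omega|^{1/2}\,\|u(\cdot,t)-u_0\|$ it suffices that $u\in C([0,T];L_2(\Omega))$ with $u(\cdot,0)=u_0$. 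This regularity is supplied by the class (\ref{regx}) in which the weak solution lives: from $u\in L_2(0,T;H_0^1(\Omega)\cap H^2(\Omega))$ and $u_t\in L_2(0,T;L_2(\Omega))$ — or, more economically, from $u\in L_2(0,T;H_0^1)$ together with $u_t\in L_2(0,T;H^{-1})$ — a standard embedding theorem for evolution triples gives $u\in C([0,T];L_2(\Omega))$, and the initial condition (\ref{condi}) is then attained in $L_2(\Omega)$.

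Assembling the pieces: pick $t^*>0$ so small that $\|u(\cdot,t)-u_0\|\leq \frac{M(0)}{2|\Omega|^{1/2}}$ for $t\in[0,t^*]$; then $M(t)\geq\tfrac12 M(0)$, whence $\int_\Omega u^2(x,t)\,dx\geq \frac{M(0)^2}{4|\Omega|}$, and, since $\gamma>0$,
\[
a(u)=\left(\int_\Omega u^2(x,t)\,dx\right)^{\gamma}\ \geq\ \left(\frac{M(0)^2}{4|\Omega|}\right)^{\gamma}=:m>0,\qquad t\in[0,t^*].
\]
(Alternatively, the variant using $\|u_0\|>0$, which also follows from $\int_\Omega u_0\,dx>0$, together with continuity of $t\mapsto\|u(\cdot,t)\|$, gives the same conclusion with slightly less bookkeeping.)

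The step I expect to require the most care is the continuity of $t\mapsto M(t)$ and the fact that the resulting $t^*$ is a genuine fixed number rather than something circular. If the lemma is proved on the Galerkin approximations $u_N$ used to build the weak solution, one needs the modulus of continuity of $t\mapsto\int_\Omega u_N\,dx$ to be uniform in $N$; this is exactly where the extra regularity $u_0\in H_0^1(\Omega)$ and $f\in L_2(0,T;H_0^1(\Omega))$ is used, as it yields $N$-uniform a priori bounds on $\|u_N(t)\|_{H_0^1}$ and, via the equation, on $\|\partial_t u_N\|_{L_2(0,T;L_2)}$, producing an equicontinuous family and hence a $t^*$ independent of $N$; letting $N\to\infty$ transfers the bound to $u$. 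The remaining ingredients — Cauchy--Schwarz and the elementary monotonicity of $s\mapsto s^{\gamma}$ for $\gamma>0$ — are routine.
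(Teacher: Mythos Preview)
The paper does not actually contain a proof of this lemma: immediately after stating it (together with Lemmas~\ref{lema2_t*} and~\ref{Lip_a}) the authors write ``The proof of this lemmas can be found in \cite{AAD14ppb}.'' So there is no in-paper argument to compare against.

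Your argument is correct and is essentially the natural one. The reduction via Cauchy--Schwarz from a lower bound on $\int_\Omega u^2$ to a lower bound on $|M(t)|=|\int_\Omega u|$, the continuity $u\in C([0,T];L_2(\Omega))$ extracted from the regularity class~(\ref{regx}), and the monotonicity of $s\mapsto s^\gamma$ for $\gamma>0$ combine exactly as you describe. The alternative you mention --- bypassing $M(t)$ and using directly that $\|u_0\|>0$ together with continuity of $t\mapsto\|u(\cdot,t)\|$ --- is slightly cleaner and equally valid. Your closing remark about uniformity in the Galerkin parameter is also apt: if the lemma is used \emph{during} the existence proof (as in \cite{AAD14ppb}) rather than applied a posteriori to an already-constructed weak solution, one does need the modulus of continuity of $t\mapsto\|u_N(\cdot,t)\|$ to be independent of $N$, and the hypotheses $u_0\in H_0^1(\Omega)$, $f\in L_2(0,T;H_0^1(\Omega))$ are precisely what provide the requisite uniform energy estimates.
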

\begin{lem}\label{lema2_t*}
If $u_0\in H_0^1(\Omega)$, $f\in L_2(0,T;L_2(\Omega))$, $\int_{\Omega} u_0\ dx>0$ and $\gamma< 0$, then there exists a $t^*>0$ such that $a(u)\leq M<\infty$ for $t\in[0,t^*]$, with $u$ a weak solution of Problem (\ref{probi}).
\end{lem}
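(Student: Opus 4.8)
The plan is to rephrase the claim in terms of the scalar function $h(t):=\|u(\cdot,t)\|^2=\int_\Omega u^2(x,t)\,dx$, for which $a(u)=h(t)^\gamma$. Since $\gamma<0$, the map $\xi\mapsto\xi^\gamma$ is decreasing on $(0,\infty)$, so it is enough to produce $t^*>0$ and a constant $c>0$ with $h(t)\ge c$ on $[0,t^*]$; then $a(u)\le c^\gamma=:M<\infty$ there. Thus the whole problem reduces to bounding $h$ from below, away from $0$, near the initial time.

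First I would record the single place where the sign hypothesis enters: $\int_\Omega u_0\,dx>0$ forces $u_0\not\equiv 0$, and since $u_0\in H_0^1(\Omega)\subset L_2(\Omega)$ this gives $h(0)=\|u_0\|^2>0$. Next I would show that $h$ is right-continuous at $t=0$ with $h(0^+)=h(0)$. One route is to invoke the regularity (\ref{regx}): $u\in L_2(0,T;H_0^1(\Omega))$ together with $u_t\in L_2(0,T;L_2(\Omega))$ yields $u\in C([0,T];L_2(\Omega))$, so $h$ is continuous on $[0,T]$. A more self-contained route is to test (\ref{fracav}) with $w=u(\cdot,t)$, obtaining the energy identity
\begin{equation*}
\tfrac12\,h'(t)+a(u)\,\|\nabla u(\cdot,t)\|^2=(f(\cdot,t),u(\cdot,t))\qquad\text{for a.e. }t\in(0,T).
\end{equation*}
Because $\gamma<0$ the term $a(u)\|\nabla u\|^2$ is nonnegative, so discarding it and using $(f,u)\le\tfrac12\|f\|^2+\tfrac12 h$ with Gronwall's inequality gives an a priori bound $\sup_{[0,T]}h\le C$; inserting this back into the identity shows $a(u)\|\nabla u\|^2\in L_1(0,T)$. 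Integrating the identity on $[0,t]$ then yields
\begin{equation*}
h(t)\ \ge\ h(0)-2\int_0^t\|f(\cdot,s)\|\,\|u(\cdot,s)\|\,ds-2\int_0^t a(u)\,\|\nabla u(\cdot,s)\|^2\,ds,
\end{equation*}
and both integrals on the right tend to $0$ as $t\to 0^+$ (the first because $\|f\|\in L_2(0,T)\subset L_1(0,T)$ and $\|u\|$ is bounded, the second by absolute continuity of the Lebesgue integral of the $L_1$ function $a(u)\|\nabla u\|^2$). Hence $\liminf_{t\to0^+}h(t)\ge h(0)$.

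Finally, since $h(0)>0$, there is $t^*\in(0,T]$ with $h(t)\ge h(0)/2$ for all $t\in[0,t^*]$, so that $a(u)=h(t)^\gamma\le(\|u_0\|^2/2)^\gamma=:M<\infty$ on $[0,t^*]$, which is the assertion; and since $\xi\mapsto\xi^\gamma$ is Lipschitz on the resulting compact interval $[\|u_0\|^2/2,\,C]\subset(0,\infty)$, the same $t^*$ also yields the Lipschitz continuity of the diffusion term used in the later proofs. The only delicate point is the passage $h(t)\to h(0)$ as $t\to0^+$: it rests on the regularity encoded in the definition of weak solution, through the continuity of $t\mapsto\|u(\cdot,t)\|^2$, or equivalently on the integrability $a(u)\|\nabla u\|^2\in L_1(0,T)$ extracted from the energy identity. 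Everything else — the reduction to a lower bound on $h$, the positivity of $h(0)$, and the choice of $t^*$ — is elementary.
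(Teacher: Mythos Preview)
The paper does not actually prove this lemma in the text; after stating Lemmas~\ref{lema1_t*}--\ref{Lip_a} it simply records that ``the proof of this lemmas can be found in \cite{AAD14ppb}.'' So there is no in-paper argument to compare yours against.

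Your argument is correct. The cleanest route is the one you mention first: the regularity $u_t\in L_2(0,T;L_2(\Omega))$ in Definition~\ref{fraca} already gives $u\in C([0,T];L_2(\Omega))$, hence $h(t)=\|u(\cdot,t)\|^2$ is continuous on $[0,T]$; since the hypothesis $\int_\Omega u_0\,dx>0$ forces $u_0\not\equiv 0$ and thus $h(0)=\|u_0\|^2>0$, continuity yields $h(t)\ge h(0)/2$ on some $[0,t^*]$, and therefore $a(u)=h(t)^\gamma\le (h(0)/2)^\gamma=:M$ there. The second route via the energy identity is also valid (your bootstrap $h'\le \|f\|^2+h\Rightarrow \sup h<\infty \Rightarrow a(u)\|\nabla u\|^2\in L_1$ is sound), but it is heavier than needed once the $C([0,T];L_2)$ embedding is available. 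Your closing remark about Lipschitz continuity of $\xi\mapsto\xi^\gamma$ on the compact interval $[h(0)/2,\sup h]\subset(0,\infty)$ is correct but belongs to Lemma~\ref{Lip_a} rather than to the present statement.
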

\begin{lem}\label{Lip_a}
If
$$0<m \leq \int_{\Omega} v^2\ dx, \int_{\Omega} w^2\ dx\leq M<\infty,$$
then
$$|a(v)-a(w)|\leq C \|v-w\|,$$
where $C$ may depend on $\gamma$ , $m$ and $M$.
\end{lem}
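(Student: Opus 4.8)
The plan is to reduce the estimate on $|a(v)-a(w)|$ to an estimate on $\left|\int_\Omega v^2\,dx - \int_\Omega w^2\,dx\right|$ via the mean value theorem applied to the scalar function $g(s)=s^\gamma$, and then control the latter difference by $\|v-w\|$ using a factorization $v^2-w^2=(v-w)(v+w)$ together with Cauchy--Schwarz. Writing $\xi_v=\int_\Omega v^2\,dx$ and $\xi_w=\int_\Omega w^2\,dx$, both lie in the interval $[m,M]$ by hypothesis, so $a(v)=\xi_v^\gamma$ and $a(w)=\xi_w^\gamma$. Since $g(s)=s^\gamma$ is $C^1$ on the compact interval $[m,M]\subset(0,\infty)$, we have $g'(s)=\gamma s^{\gamma-1}$ bounded there by some constant $C_1=C_1(\gamma,m,M)$ (concretely $C_1=|\gamma|\max\{m^{\gamma-1},M^{\gamma-1}\}$), whence $|a(v)-a(w)|=|g(\xi_v)-g(\xi_w)|\le C_1|\xi_v-\xi_w|$.

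Next I would estimate $|\xi_v-\xi_w|=\left|\int_\Omega (v^2-w^2)\,dx\right|=\left|\int_\Omega (v-w)(v+w)\,dx\right|\le \|v-w\|\,\|v+w\|$ by the Cauchy--Schwarz inequality in $L_2(\Omega)$. Then $\|v+w\|\le\|v\|+\|w\|=\sqrt{\xi_v}+\sqrt{\xi_w}\le 2\sqrt{M}$. Combining, $|\xi_v-\xi_w|\le 2\sqrt{M}\,\|v-w\|$, and therefore $|a(v)-a(w)|\le 2\sqrt{M}\,C_1\,\|v-w\|$, which is the claimed bound with $C=2\sqrt{M}\,C_1$ depending only on $\gamma$, $m$ and $M$.

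There is no real obstacle here; the only point requiring a little care is the case $\gamma<0$, where $s^{\gamma-1}$ blows up as $s\to 0^+$ — this is exactly why the lower bound $m>0$ is needed, and the supremum of $|g'|$ on $[m,M]$ is then attained at the endpoint $s=m$, giving a finite constant. (For $0<\gamma<1$ one could alternatively invoke Hölder continuity of $s\mapsto s^\gamma$ directly, but the mean value argument on $[m,M]$ handles all real $\gamma$ uniformly.) The lower bound $m$ on $\int v^2,\int w^2$ is used only through $g'$; the upper bound $M$ is used both in bounding $g'$ when $\gamma>1$ and in bounding $\|v+w\|$.
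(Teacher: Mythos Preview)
Your argument is correct: the mean value theorem applied to $g(s)=s^{\gamma}$ on the compact interval $[m,M]\subset(0,\infty)$ gives the Lipschitz bound in the scalar variable, and the factorization $v^{2}-w^{2}=(v-w)(v+w)$ together with Cauchy--Schwarz and $\|v+w\|\le 2\sqrt{M}$ converts this into the desired $L_{2}$ estimate. Note, however, that the present paper does not actually prove this lemma; it merely cites \cite{AAD14ppb} for the proof, so there is no in-paper argument to compare against. Your approach is the natural one and is almost certainly what is done in the cited reference.
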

The proof of this lemmas can be found in \cite{AAD14ppb}.

\section{Space discretization}
In this section, we discretize the spatial domain into simplexes (intervals in one dimension, triângles in two dimensions and triangular pyramids in three dimensions). We also define some auxiliary functions as well as the semidiscrete solution. Then we prove the convergence of the semidiscrete solution to the weak solution.
Let $\mathcal{T}_h$ denote a partition of $\Omega$ into disjoint simplexes $
T_i$, $i=1,\dots,n_t$, such that no vertex of any simplex lies in the interior or on the side of another simplex and $h=\max\{diam(T_i),i=1,\dots,n_t\}$. Let $S_h^k$ denote the continuous functions on the
closure $\bar\Omega$ of $\Omega$, which are polynomials of degree $k$ in each
simplex of $\mathcal{T}_h$ and which vanish on $\partial\Omega$, that is,
\begin{equation*}
S_h^k=\{W\in C_0^0(\bar\Omega)| {W}_{|{T_i}} \text{ is a polynomial of
degree $k$ for all } T_i\in\mathcal{T}_h\}.
\end{equation*}
If $\{\varphi_j\}_{j=1}^{n_p}$ is the Lagrange basis of $S_h^k$, associated to the equally spaced nodes $\{P_j\}_{j=1}^{n_p}$, then we can represent every $W\in S_h^k$ as $W=\sum_{j=1}^{n_p}w_j\varphi_j$. The definitions and lemmas below are important in the proofs of the main theorems. The proofs of the lemmas can be found in \cite{Tho06}. Given a smooth function $u$ on $\Omega$ which vanishes on $\partial\Omega$,
we may define its interpolant and its projection as follows.
\begin{defi}[Interpolant]
A function $I_hu\in S_h^k$ is said to be the interpolant of $u\in
H_0^1(\Omega)$ in $S_h^k$ if it satisfies
\begin{equation*}
I_hu=\sum_{j=1}^{n_p}u(P_j)\varphi_j.
\end{equation*}
\end{defi}
\begin{lem}
\label{errint} If $u\in H^{k+1}(\Omega)\cap H_0^1(\Omega)$, then
\begin{equation*}
\|I_h u-u\|+h\|\nabla(I_h u-u)\|\leq Ch^{k+1}\|u\|_{H^{k+1}(\Omega)}.
\end{equation*}
\end{lem}
\begin{defi}[Ritz projection]
A function $\tilde U(x,t)\in S_h^k$ is said to be the Ritz projection of $u\in
H_0^1(\Omega)$ onto $S_h^k$ if it satisfies
\begin{equation*}
(\nabla \tilde U,\nabla W)=(\nabla u,\nabla W),\quad \text{ for all } W\in S_h^k.
\end{equation*}
\end{defi}
\begin{lem}
\label{errproj} If $u\in H^{k+1}(\Omega)\cap H_0^1(\Omega)$, then
\begin{equation*}
\|\tilde U-u\|+h\|\nabla(\tilde U-u)\|\leq Ch^{k+1}\|u\|_{H^{k+1}(\Omega)},
\end{equation*}
where $C$ does not depend either on $h$ or $k$.
\end{lem}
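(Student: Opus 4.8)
The statement is only referenced to \cite{Tho06}, but the plan is to reconstruct the classical two-step argument for Ritz-projection estimates. First I would record that the bilinear form $(\nabla\cdot,\nabla\cdot)$ is bounded and, by the Poincar\'e inequality, coercive on $H_0^1(\Omega)$, so by Lax--Milgram the Ritz projection $\tilde U\in S_h^k$ exists and is unique, and by definition it satisfies the Galerkin orthogonality relation
\begin{equation*}
(\nabla(\tilde U-u),\nabla W)=0\qquad\text{for all }W\in S_h^k.
\end{equation*}

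For the gradient term I would first establish the best-approximation (C\'ea-type) bound: writing $\tilde U-u=(\tilde U-I_hu)+(I_hu-u)$ and using orthogonality against $\tilde U-I_hu\in S_h^k$,
\begin{equation*}
\|\nabla(\tilde U-u)\|^2=(\nabla(\tilde U-u),\nabla(I_hu-u))\leq\|\nabla(\tilde U-u)\|\,\|\nabla(I_hu-u)\| ,
\end{equation*}
so that $\|\nabla(\tilde U-u)\|\leq\|\nabla(I_hu-u)\|$. Lemma \ref{errint} then yields $\|\nabla(\tilde U-u)\|\leq Ch^{k}\|u\|_{H^{k+1}(\Omega)}$, which already controls $h\|\nabla(\tilde U-u)\|$ by $Ch^{k+1}\|u\|_{H^{k+1}(\Omega)}$.

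For the $L_2$ term I would use the Aubin--Nitsche duality argument. Setting $e=\tilde U-u$, let $\phi\in H_0^1(\Omega)$ solve $-\Delta\phi=e$ in $\Omega$ with $\phi=0$ on $\partial\Omega$; by $H^2$ elliptic regularity, $\|\phi\|_{H^2(\Omega)}\leq C\|e\|$. Using Green's formula (both $e$ and $\phi$ vanish on $\partial\Omega$) and then orthogonality against $I_h\phi\in S_h^k$,
\begin{equation*}
\|e\|^2=(e,-\Delta\phi)=(\nabla e,\nabla\phi)=(\nabla e,\nabla(\phi-I_h\phi))\leq\|\nabla e\|\,\|\nabla(\phi-I_h\phi)\|\leq Ch\,\|\nabla e\|\,\|\phi\|_{H^2(\Omega)} ,
\end{equation*}
where the last inequality uses the first-order interpolation estimate for the $S_h^k$-interpolant of $\phi\in H^2(\Omega)\cap H_0^1(\Omega)$ (the $k=1$ instance of Lemma \ref{errint}, valid for every $k\geq 1$). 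Combining with $\|\phi\|_{H^2(\Omega)}\leq C\|e\|$ and dividing by $\|e\|$ gives $\|e\|\leq Ch\|\nabla e\|\leq Ch^{k+1}\|u\|_{H^{k+1}(\Omega)}$ by the previous step. Adding the two bounds yields the assertion; the independence of $C$ from $h$ and $k$ is inherited from Lemma \ref{errint}.

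The step I expect to need the most care is the $H^2$-regularity of the dual Poisson problem: a merely Lipschitz boundary does not in general guarantee $\phi\in H^2(\Omega)$ with $\|\phi\|_{H^2(\Omega)}\leq C\|e\|$, so one has to assume in addition that $\Omega$ is convex (or has a $C^{1,1}$ boundary), which I take to be tacitly in force. If only a reduced regularity $\phi\in H^{1+s}(\Omega)$ with $0<s<1$ were available, the same computation would still go through but would give the weaker $L_2$ rate $\|e\|\leq Ch^{k+s}\|u\|_{H^{k+1}(\Omega)}$; every other step is routine.
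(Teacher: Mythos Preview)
Your proposal is correct and is precisely the classical argument from \cite{Tho06} that the paper cites in lieu of a proof; the paper itself gives no proof of this lemma beyond that reference, so there is nothing further to compare. Your caveat about $H^2$-regularity of the dual problem is well taken and is indeed a tacit assumption in the paper's setting.
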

The semidiscrete problem consists in
finding $U$, belonging to $S_h^k$, for $t\geq 0$, that satisfies Definition \ref{fraca}.
\begin{defi}[Semidiscrete solution]
A function $U(x,t)\in S_h^k$, for $t\geq 0$, is said to be a semidiscrete solution of Problem (\ref{probi}) if it satisfies
\begin{equation}  \label{probsd}
\left\{%
\begin{array}{l}
({U}_{t},W)+a(U)(\nabla U,\nabla W)=(f,W),\\
U(x,0)=I_hu_0, \\
\end{array}%
\right.
\end{equation}
for all $W\in
S_h^k$ and $t\in]0,T[.$
\end{defi}
Since $a(U)$ is continuous, the existence of a semidiscrete solution can easily be proved using Caratheodory's theorem. The proof of the uniqueness of the semidiscrete solution is identic to the proof of Theorem 14 in \cite{AAD14ppb} and the stability is proved using the arguments of Lemmas 2 and 4 in  \cite{AAD14ppb}.
The convergence of the semidiscrete solution to the weak solution, as $h$ tends to zero, is proved in the next theorem.
\begin{thm}\label{conv_h}
Suppose that $u_0\in H_0^1(\Omega)$, $f\in L_2(0,t^*;H_0^1(\Omega))$ and $\int_{\Omega} u_0\ dx>0$. If $u$ is the weak solution of Problem (\ref{probi}) and $U$ is its semidiscrete solution, then
\begin{equation*}
\|U-u\| \leq Ch^{k+1},\quad t\in]0,t^*],
\end{equation*}
where $C$ does not depend on $h$ and $k$, but may depend on $\|\nabla u\|_{L_{\infty}(0,t^*;L_2(\Omega))}$, $\|u_0\|_{H^{k+1}(\Omega)}$, $\|u\|_{L_2(0,t^*;H^{k+1}(\Omega))}$ and $\|u_t\|_{L_2(0,t^*;H^{k+1}(\Omega))}$.
\end{thm}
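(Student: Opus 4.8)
The plan is to use the standard Wheeler-type splitting of the error via the Ritz projection, writing $U - u = (U - \tilde U) + (\tilde U - u) =: \theta + \rho$. The term $\rho = \tilde U - u$ is controlled directly by Lemma~\ref{errproj} (and its time-differentiated analogue, since $\tilde U_t$ is the Ritz projection of $u_t$), giving $\|\rho\| + \|\rho_t\| \leq Ch^{k+1}$ in terms of the $H^{k+1}$-norms of $u$ and $u_t$. So everything reduces to estimating $\theta \in S_h^k$. For this I would first note that the hypotheses, together with Lemma~\ref{lema1_t*}, guarantee $a(u) \geq m > 0$ on $[0,t^*]$; one also needs an upper bound $a(u) \leq M$ on this interval, which follows from the a priori regularity of $u$ built into Definition~\ref{fraca} (and can be arranged by possibly shrinking $t^*$). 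The same bounds must be transferred to $a(U)$: since $\int_\Omega U^2\,dx$ is close to $\int_\Omega u^2\,dx$ by the error estimate being bootstrapped, one shows $m/2 \leq a(U) \leq 2M$, say, on $[0,t^*]$, so that Lemma~\ref{Lip_a} applies to both $a(u)$ and $a(U)$.

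Next I would derive the error equation for $\theta$. Subtracting the weak formulation \eqref{fracav} (tested against $W \in S_h^k$) from the semidiscrete equation \eqref{probsd}, and using the defining property of the Ritz projection to replace $(\nabla u, \nabla W)$ by $(\nabla \tilde U, \nabla W)$, one obtains for all $W \in S_h^k$
\begin{equation*}
(\theta_t, W) + a(U)(\nabla \theta, \nabla W) = -(\rho_t, W) + \bigl(a(u) - a(U)\bigr)(\nabla u, \nabla W).
\end{equation*}
Now I would choose $W = \theta$. The left-hand side gives $\tfrac12 \tfrac{d}{dt}\|\theta\|^2 + a(U)\|\nabla\theta\|^2 \geq \tfrac12\tfrac{d}{dt}\|\theta\|^2 + \tfrac m2\|\nabla\theta\|^2$. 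On the right, the first term is handled by Cauchy--Schwarz and Young: $|(\rho_t,\theta)| \leq C\|\rho_t\|^2 + \tfrac14\|\theta\|^2$. For the second term I would use Lemma~\ref{Lip_a} to get $|a(u) - a(U)| \leq C\|u - U\| \leq C(\|\theta\| + \|\rho\|)$, and then bound $|(a(u)-a(U))(\nabla u,\nabla\theta)| \leq C(\|\theta\| + \|\rho\|)\|\nabla u\|\,\|\nabla\theta\|$; absorbing $\|\nabla\theta\|$ into the $\tfrac m2\|\nabla\theta\|^2$ term via Young's inequality leaves $\tfrac{C}{m}\|\nabla u\|^2(\|\theta\|^2 + \|\rho\|^2)$. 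Collecting terms yields
\begin{equation*}
\frac{d}{dt}\|\theta\|^2 \leq C\bigl(1 + \|\nabla u\|^2\bigr)\|\theta\|^2 + C\bigl(\|\rho_t\|^2 + \|\rho\|^2\bigr),
\end{equation*}
where the coefficient $1 + \|\nabla u\|^2$ is integrable on $[0,t^*]$ by the $L_\infty(0,t^*;L_2)$ bound on $\nabla u$. Since $\theta(0) = I_h u_0 - \tilde U(0)$, we estimate $\|\theta(0)\| \leq \|I_h u_0 - u_0\| + \|u_0 - \tilde U(0)\| \leq Ch^{k+1}\|u_0\|_{H^{k+1}}$ by Lemmas~\ref{errint} and \ref{errproj}. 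Gr\"onwall's inequality then gives $\|\theta\|^2 \leq Ch^{2(k+1)}$ on $[0,t^*]$, and the triangle inequality $\|U - u\| \leq \|\theta\| + \|\rho\| \leq Ch^{k+1}$ finishes the proof.

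The main obstacle is the circularity lurking in the step ``transfer the bounds $m \leq a(U) \leq M$ to the discrete level,'' because that bound is what makes the Gr\"onwall argument run, yet a priori it seems to need the conclusion. I would resolve this with a continuation (bootstrap) argument: let $t_h \leq t^*$ be the largest time up to which $\|U - u\| \leq 1$ (say) holds; on $[0,t_h]$ the closeness of $\int_\Omega U^2$ to $\int_\Omega u^2$ gives the two-sided bound on $a(U)$, the estimate above then yields $\|U-u\| \leq Ch^{k+1}$ there, and for $h$ small enough this is strictly less than $1$, forcing $t_h = t^*$. A secondary technical point is justifying $a(U) \leq M$ and the measurability/absolute continuity needed to integrate $\tfrac{d}{dt}\|\theta\|^2$; these follow from the regularity in Definition~\ref{fraca} and the smoothness of $U$ as a solution of a finite-dimensional ODE system. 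Everything else is routine manipulation of the approximation lemmas.
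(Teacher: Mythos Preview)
Your proposal is correct and follows essentially the same route as the paper: the Ritz-projection splitting $U-u=\theta+\rho$, the error equation for $\theta$ tested with $W=\theta$, the Lipschitz bound of Lemma~\ref{Lip_a} on $a(u)-a(U)$, absorption of $\|\nabla\theta\|^2$ via Young, and Gronwall. The only notable difference is that you explicitly address the two-sided bound $m\leq a(U)\leq M$ for the \emph{semidiscrete} solution via a continuation (bootstrap) argument, whereas the paper simply invokes Lemmas~\ref{lema1_t*} and \ref{lema2_t*} (stated for the exact solution $u$) without further comment; your extra care here is warranted and closes a gap the paper leaves implicit.
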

In virtue of Lemmas \ref{lema1_t*}, \ref{lema2_t*} and \ref{Lip_a}, the proof follows from classical arguments (see, for example, \cite{Tho06}), and we will only present the main steps.
\begin{proof}
First, we split the error in two parts, by introducing the Ritz projection $\tilde U $ of $u$, and we obtain
$$\|U-u\|\leq\|U-\tilde U\|+\|\tilde U-u\|=\|\theta\|+\|\rho\|.$$
The estimate of $\rho$ is obtained by Lemma \ref{errproj}, as
\begin{equation}\label{erho}
\|\rho\|\leq Ch^{k+1}\|u\|_{H^{k+1}(\Omega)}.
\end{equation}
For $\theta$, we have\\
$({\theta}_{t},W)+a(U)(\nabla\theta,\nabla W)$\\

$=(U_{t},W)+a(U)(\nabla U,\nabla W)-(\tilde U_{t},W)-a(U)(\nabla\tilde U,\nabla W)$\\

$=(f,W)-((u)_{t},W)-a(u)(\nabla u,\nabla W)+((u-\tilde U)_{t},W)$\\

$\phantom{=}+(a(u)-a(U))(\nabla u,\nabla W)$\\

$=((u-\tilde U)_{t},W)+(a(u)-a(U))(\nabla u,\nabla W).$\\
Using $\theta$ as the test function $W$, we arrive at
$$\frac12\frac{d}{dt}\|\theta^2\|^2+a(U)\|\nabla{\theta}\|^2=
({\rho}_{t},\theta)+(a(u)-a(U))(\nabla u,\nabla\theta).$$
Thus, the H\"older inequality and Lemmas \ref{lema1_t*} and \ref{lema2_t*} imply that
$$\frac12\frac{d}{dt}\|\theta\|^2+m\|\nabla\theta\|^2\leq\frac12\|{\rho}_{t}\|^2
+\frac12\|\theta\|^2+m\|\nabla\theta\|^2+C(a(u)-a(U))^2
\|\nabla u\|^2.$$
The Lipschitz-continuity of the diffusion term stated in Lemma \ref{Lip_a} permits us to prove that $\theta$ satisfies the differential inequality
$$\frac{d}{dt}\|\theta\|^2\leq C\|\theta\|^2+C\|{\rho}\|^2+\|{\rho}_{t}\|^2,$$
with $C=C(\|\nabla u\|_{L_{\infty}(0,t^*;L_2(\Omega))})$.
By Gronwall's Lemma, we obtain
$$\|\theta\|^2\leq C\|\theta(x,0)\|^2+C\int_0^{t^*}\|{\rho}\|^2\ dt+\int_0^{t^*}\|{\rho}_{t}\|^2\ dt.$$
Making use of Lemmas \ref{errint} and \ref{errproj}, the elements of the right hand side are bounded as follows:
$$\|\theta(x,0)\|^2\leq Ch^{2(k+1)}\|u_0\|_{H^{k+1}(\Omega)}^2,$$
$$\int_0^{t^*}\|{\rho}\|^2\ dt\leq Ch^{2(k+1)}\int_0^{t^*}\left\|u\right\|_{H^{k+1}(\Omega)}^2\ dt,$$
$$\int_0^{t^*}\|{\rho}_{t}\|^2\ dt\leq Ch^{2(k+1)}\int_0^{t^*}\left\|u_t\right\|_{H^{k+1}(\Omega)}^2\ dt.$$
Then $\theta$ satisfies
$$\|\theta\|^2\leq Ch^{2(k+1)},$$
where $C$ depends on $\|\nabla u\|_{L_{\infty}(0,t^*;L_2(\Omega))}$, $\|u_0\|_{H^{k+1}(\Omega)}$, $\|u\|_{L_2(0,t^*;H^{k+1}(\Omega))}$ and $\|u_t\|_{L_2(0,t^*;H^{k+1}(\Omega))}$. Adding the estimate in (\ref{erho}), the result is proved.
\end{proof}

\section{Time discretization}
Now we discretize the problem also in time. For the time discretization, we will use the Crank-Nicolson method. In order to avoid the need to solve a nonlinear system in each time step, we will linearise the method by transforming it in a multistep method. For the first estimate we will use a predictor-corrector scheme.
Consider the partition in non empty intervals $[0,t^*]=\overset{n_{i}}{
\underset{j=1}{\cup }}[t_{j-1},t_{j}]=\overset{n_{i}}{\underset{j=1}{\cup }
}I_{j}$, with $int(I_{j})\cap
int(I_{i})=\emptyset$, $\forall i\neq j$ and $\delta=\max_{j=1,...,n_i}\{t_j-t_{j-1}\}$.  Define
$$\bar\partial U_n=\frac{U_n-U_{n-1}}{\delta},\, \hat U_n=\frac{U_n+U_{n-1}}{2},\, \bar U_n=\frac32U_{n-1}-\frac12U_{n-2}$$
$$\text{ and } f_{n-1/2}=f(x,\frac{t_n+t_{n-1}}{2}).$$
The fully discrete approximation $U_n(x)\approx u(x,t_n)$, $n=0,\dots,n_{i}$, belonging to $S_h^k$, is defined as follows:
\begin{defi}[Fully discrete approximation]
A function $U_n(x)\in S_h^k$ is said to be a fully discrete solution of Problem (\ref{probi}) if it satisfies\\

$U_0=I_hu_0, \quad n=0$,
\begin{equation}\label{sol_dise}
(\frac{U_{1,0}-U_0}{\delta},W)+a(U_0)(\nabla \left( \frac{U_{1,0}+U_0}{2}\right),\nabla W)=(f_{1/2},W), \quad n=1,
\end{equation}
\begin{equation}\label{sol_disc}
(\bar \partial U_1,W)+a(\frac{U_{1,0}+U_0}{2})(\nabla \hat U_1,\nabla W)=(f_{1/2},W), \quad n=1,
\end{equation}
\begin{equation}\label{sol_dis}
(\bar \partial U_n,W)+a(\bar U_n)(\nabla \hat U_n,
\nabla W)=(f_{n-1/2},W), \quad n=2,\dots,n_{i},
\end{equation}
for all $W\in S_h^k$.
\end{defi}
We observe that the linear systems in (\ref{sol_dise})-(\ref{sol_dis}) always have a unique solution
In the next theorem, we prove the convergence of the fully discrete solution to the weak solution.
\begin{thm}\label{conv_d}
Suppose that $u_0\in H_0^1(\Omega)$, $f\in L_2(0,t^*;H_0^1(\Omega))$ and $\int_{\Omega} u_0\ dx>0$. If $u$ is the solution of Problem (\ref{probi}) and $U_n$ the fully discrete solution, then
\begin{equation*}
\|U_n(x)-u(x,t_n)\| \leq C(h^{k+1}+\delta^2),\quad n=1,\dots,n_{i},
\end{equation*}
where $C$ does not depend on either $h$ or $k$ nor on $\delta$, but may depend on $\left\|\pd{^3u}{t^3}\right\|$, $\|u\|_{H^{k+1}(\Omega)}$, $\|u_t\|$, $\|u_{tt}\|$ and $\|\nabla u_{tt}\|$.
\end{thm}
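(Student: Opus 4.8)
The plan is to follow the same error-splitting strategy used in Theorem \ref{conv_h}, but now accounting for the time discretization. Write the error at time $t_n$ as
$$U_n-u(x,t_n)=(U_n-\tilde U_n)+(\tilde U_n-u(x,t_n))=\theta_n+\rho_n,$$
where $\tilde U_n$ is the Ritz projection of $u(\cdot,t_n)$. The term $\rho_n$ is controlled directly by Lemma \ref{errproj}, giving $\|\rho_n\|\leq Ch^{k+1}\|u\|_{H^{k+1}(\Omega)}$, so the whole problem reduces to estimating $\|\theta_n\|$. I would derive an equation for $\bar\partial\theta_n$ by subtracting the fully discrete scheme (\ref{sol_dis}) from the weak formulation (\ref{fracav}) evaluated at $t_{n-1/2}$. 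This produces, for $n\geq 2$, an identity of the schematic form
$$(\bar\partial\theta_n,W)+a(\bar U_n)(\nabla\hat\theta_n,\nabla W)=-(\bar\partial\rho_n,W)+(\text{consistency terms})+(a(u(t_{n-1/2}))-a(\bar U_n))(\nabla u,\nabla W)+\dots,$$
where the consistency terms collect the Crank--Nicolson truncation error: the discrepancy between $\bar\partial u(t_n)$ and $u_t(t_{n-1/2})$, and between $\nabla\hat u_n$ and $\nabla u(t_{n-1/2})$. By Taylor expansion these are $O(\delta^2)$ in the appropriate norms, contributing bounds involving $\|\partial^3 u/\partial t^3\|$, $\|u_{tt}\|$ and $\|\nabla u_{tt}\|$.

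The heart of the argument is the choice $W=\hat\theta_n$ as test function, which yields
$$\frac{1}{2\delta}\bigl(\|\theta_n\|^2-\|\theta_{n-1}\|^2\bigr)+a(\bar U_n)\|\nabla\hat\theta_n\|^2\leq (\text{RHS},\hat\theta_n).$$
Here the nondegeneracy $a(\bar U_n)\geq m>0$ from Lemma \ref{lema1_t*} (and the upper bound from Lemma \ref{lema2_t*}) keeps the diffusion term on the left-hand side positive. The delicate point is the nonlocal term: to apply the Lipschitz estimate of Lemma \ref{Lip_a} to $|a(u(t_{n-1/2}))-a(\bar U_n)|$ I must control $\|u(t_{n-1/2})-\bar U_n\|$, and $\bar U_n=\tfrac32 U_{n-1}-\tfrac12 U_{n-2}$ is an extrapolation. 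I would split $u(t_{n-1/2})-\bar U_n$ into $\bigl(u(t_{n-1/2})-(\tfrac32 u(t_{n-1})-\tfrac12 u(t_{n-2}))\bigr)$, which is an $O(\delta^2)$ extrapolation error, plus $\tfrac32(\theta_{n-1}+\rho_{n-1})-\tfrac12(\theta_{n-2}+\rho_{n-2})$. Hence $|a(u(t_{n-1/2}))-a(\bar U_n)|^2\|\nabla u\|^2$ is bounded by $C(\delta^4+h^{2(k+1)}+\|\theta_{n-1}\|^2+\|\theta_{n-2}\|^2)$, with $C$ depending on $\|\nabla u\|_{L_\infty(L_2)}$. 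After absorbing the $\|\nabla\hat\theta_n\|^2$ terms using $m$ and applying Cauchy--Schwarz and Young's inequality to the remaining products, I arrive at a discrete Gronwall inequality of the form
$$\|\theta_n\|^2\leq (1+C\delta)\|\theta_{n-1}\|^2+C\delta\|\theta_{n-2}\|^2+C\delta\bigl(\delta^4+h^{2(k+1)}+\|\bar\partial\rho_n\|^2\bigr).$$

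To close the induction I first handle the startup: the predictor step (\ref{sol_dise}) and corrector step (\ref{sol_disc}) must be shown to give $\|\theta_1\|\leq C(h^{k+1}+\delta^2)$, which follows by the same energy technique applied to a single Crank--Nicolson step with the one-step linearization $a(U_0)$ replaced by $a(\tfrac{U_{1,0}+U_0}{2})$; the predictor-corrector correction is itself $O(\delta^2)$ since $\|U_{1,0}+U_0)/2 - U_0\|=O(\delta)$ feeds through Lemma \ref{Lip_a}. Together with $\|\theta_0\|=\|I_hu_0-\tilde U_0\|\leq Ch^{k+1}$ from Lemmas \ref{errint} and \ref{errproj}, and using $\sum\delta\|\bar\partial\rho_n\|^2\leq C\int_0^{t^*}\|\rho_t\|^2\,dt\leq Ch^{2(k+1)}\int_0^{t^*}\|u_t\|^2_{H^{k+1}(\Omega)}\,dt$, the discrete Gronwall lemma yields $\|\theta_n\|^2\leq C(h^{2(k+1)}+\delta^4)$ uniformly in $n$. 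Adding $\|\rho_n\|$ gives the claimed bound. I expect the main obstacle to be the bookkeeping around $\bar U_n$: ensuring that the second-order extrapolation really does contribute only $O(\delta^2)$ and that the two-step dependence ($\theta_{n-1}$ and $\theta_{n-2}$) still fits a standard discrete Gronwall argument, which is why the startup estimates for $\theta_0$ and $\theta_1$ must be established separately and carefully before the induction can run.
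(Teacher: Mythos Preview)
Your proposal follows essentially the same route as the paper: the $\theta/\rho$ splitting via the Ritz projection, testing with $\hat\theta_n$, using Lemmas~\ref{lema1_t*}--\ref{Lip_a} to control the nonlocal coefficient, splitting $u(t_{n-1/2})-\bar U_n$ into an $O(\delta^2)$ extrapolation error plus $\bar\rho_n+\bar\theta_n$, and closing with a two-step discrete Gronwall inequality. All of this matches the paper's argument.

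The one place where your outline is too loose is the startup. You write that the predictor--corrector gives $\|\theta_1\|\leq C(h^{k+1}+\delta^2)$ ``since $\|(U_{1,0}+U_0)/2-U_0\|=O(\delta)$ feeds through Lemma~\ref{Lip_a}.'' That is not the quantity that matters, and $O(\delta)$ is not enough. What enters the corrector estimate is $\|u_{1/2}-(U_{1,0}+U_0)/2\|$, and you need the \emph{predictor} bound first to control it. The paper's actual mechanism is a bootstrap: the predictor step uses $a(U_0)$ with $\|u_{1/2}-U_0\|=O(\delta)$, which after one time step yields only $\|\theta_{1,0}\|^2\leq C(h^{2(k+1)}+\delta^3)$; this $O(\delta^{3/2})$ accuracy then gives $\|u_{1/2}-(U_{1,0}+U_0)/2\|\leq C(h^{k+1}+\delta^{3/2})$, and feeding \emph{that} into the corrector step produces $\|\theta_1\|^2\leq\|\theta_0\|^2+C\delta(h^{2(k+1)}+\delta^3)\leq C(h^{2(k+1)}+\delta^4)$. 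Without this two-stage gain you would be stuck at $O(\delta^{3/2})$ for $\theta_1$, and the induction for $n\geq 2$ would not reach $O(\delta^2)$. Once you make this bootstrap explicit, your argument and the paper's coincide.
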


\begin{proof}
As before, we split the error as follows:
$$\|U_n(x)-u(x,t_n)\|\leq\|U_n(x)-\tilde U_n\|+\|\tilde U_n(x)-u(x,t_n)\|=\|\theta_n\|+\|\rho_n\|.$$
The estimate of $\rho_n$ is obtained by Lemma \ref{errproj}.
Concerning $\theta_n$, we start by the estimator's solution. Considering $\theta_{1,0}=U_{1,0}-\tilde U_1$, $\hat
\theta_{1,0}=\frac{\theta_{1,0}+\theta_0}{2}$ and
$\overline{\partial}\theta_{1,0}=\frac{\theta_{1,0}+\theta_0}{\delta}$.
We then have
$$(\bar\partial \theta_{1,0},W)+a(U_0)(\nabla \hat \theta_{1,0},\nabla W)=(\bar\partial U_{1,0},W)+a(U_0)(\nabla \hat U_{1,0},\nabla W)-(\bar\partial \tilde U_{1,0},W)$$
$$-a(U_0)(\nabla \hat{\tilde{U}}_1,\nabla W)=(f_{1/2},W)-((u_t)_{1/2},W)-a(u_{1/2})(\nabla u_{1/2},\nabla W)$$
$$+((u_t)_{1/2}-\bar\partial \tilde{U}_1,W)+(a(u_{1/2})\nabla u_{1/2}-a(U_0)\nabla \hat u_1,\nabla W) $$
Choosing $W=\hat{\theta}_{1,0}$, we obtain
$$\frac12\bar \partial\|\theta_{1,0}\|^2+m\|\nabla\hat \theta_{1,0}\|^2\leq C(\|(u_t)_{1/2}-\bar\partial \tilde{U}_1\|+\|\nabla(u_{1/2}- \hat u_1)\|+\|u_{1/2}-U_0\|)\|\nabla\hat \theta_{1,0}\|.$$
Using the differentiation and interpolation errors, we can estimate each element of the right hand side as\\

$\|(u_t)_{1/2}-\bar\partial\tilde U_1\|\leq\|(u_t)_{1/2}-\overline\partial u_1\|+\|\overline\partial u_1-\overline\partial\tilde U_1\| \leq C\delta^2\left\|\pd{^3u}{t^3}\right\|+C h^{k+1}\|u\|_{H^{k+1}(\Omega)},$\\

$\left\|\nabla(u_{1/2}-\hat{u}_1)\right\|\leq C\delta^2\|\nabla u_{tt}\|,$\\

$\|u_{1/2}-U_0\|\leq\|u_{1/2}-u_0\|+\|u_0-U_0\|\leq C\delta\|u_t\|+C h^{k+1}\|u\|_{H^{k+1}(\Omega)}.$\\

\noindent Hence
$$\overline{\partial}\|\theta_{1,0}\|^2\leq C (h^{k+1}+\delta)^2,$$
and we have the estimate
$$\|\theta_{1,0}\|^2\leq\|\theta_{0}\|^2+ C\delta (h^{k+1}+\delta)^2\leq C(h^{2(k+1)}+\delta^3),$$
with $C=C(\left\|\pd{^3u}{t^3}\right\|, \|u\|_{H^{k+1}(\Omega)}, \|u_t\|, \|\nabla u_{tt}\|)$.
Repeating this process for the corrector equation, we arrive at\\

$\frac12\bar \partial\|\theta_{1}\|^2+m\|\nabla\hat \theta_{1}\|^2\leq C(\|(u_t)_{1/2}-\bar\partial \tilde{U}_1\|+\|\nabla(u_{1/2}- \hat u_1)\|$\\

$\phantom{\frac12\bar \partial\|\theta_{1}\|^2+m\|\nabla\hat \theta_{1}\|^2\leq}+\|u_{1/2}-\frac{U_{1,0}-U_0}{2}\|)\|\nabla\hat \theta_{1}\|,$\\
and now we use the estimate
$$\|u_{1/2}-\frac{U_{1,0}-U_0}{2}\|\leq \|u_{1/2}-\hat{\tilde U}_1\|+\|\hat{\tilde U}_1-\frac{U_{1,0}-U_0}{2}\|$$
$$\leq \|u_{1/2}-\hat{\tilde U}_1\|+\frac12\|\theta_{1,0}\|+\frac12\|\theta_0\|$$
$$\leq Ch^{k+1}\|u\|_{H^{k+1}(\Omega)}+C\delta^2\|u_{tt}\|+C(h^{k+1}+\delta^{\frac32})+Ch^{k+1}\|u\|_{H^{k+1}(\Omega)}$$
$$\leq C(h^{k+1}+\delta^{\frac32}),$$
and, by Cauchy's inequality, we conclude that
$$\overline{\partial}\|\theta_{1}\|^2\leq C (h^{2(k+1)}+\delta^3).$$
Whence
$$\|\theta_{1}\|^2\leq\|\theta_{0}\|^2+ C\delta (h^{2(k+1)}+\delta^3)\leq C(h^{2(k+1)}+\delta^4),$$
where  $C=C(\left\|\pd{^3u}{t^3}\right\|, \|u\|_{H^{k+1}(\Omega)}, \|u_{tt}\|, \|\nabla u_{tt}\|)$.
Using the estimate
\begin{eqnarray*}
   \|u_{n-1/2}-\overline{U}_n\|& \leq &  \|u_{n-1/2}-\overline{u}_n\|+\|\overline{u}_n-\overline{U}_n\|\\
   & \leq & \|u_{n-1/2}-\overline{u}_n\|+\|\overline\rho_n\|+\|\overline\theta_n\| \\
   & \leq & C\delta^2\|u_{tt}\|+C h^{k+1}\|u\|_{H^{k+1}(\Omega)}+C(\|\theta_{n-1}\|+\|\theta_{n-2}\|),
\end{eqnarray*}
and applying the same process to Equation (\ref{sol_dis}), we can show that
$$\frac12\bar \partial\|\theta_{n}\|^2+m\|\nabla\hat \theta_{b}\|^2\leq C(\|(u_t)_{n-1/2}-\bar\partial \tilde{U}_n\|+\|\nabla(u_{n-1/2}- \hat u_n)\|$$
$$+\|u_{n-1/2}-\bar U_n\|)\|\nabla\hat \theta_{n}\|,$$
and
$$\overline{\partial}\|\theta_{n}\|^2\leq C\|\theta_{n-1}\|^2+C\|\theta_{n-2}\|^2+C(h^{(k+1)}+\delta^2)^2,\quad n\geq 2.$$
Iterating, we obtain
$$\|\theta_{n}\|^2\leq(1+C\delta)\|\theta_{n-1}\|^2+ C\delta\|\theta_{n-2}\|^2 +C\delta (h^{k+1}+\delta^2)^2\leq C\|\theta_{1}\|^2+ C\delta\|\theta_{0}\|^2$$
$$+ C\delta (h^{k+1}+\delta^2)^2,$$
and recalling the estimates for $\|\theta_{0}\|$, $\|\theta_{1}\|$ and
$\|\rho_n\|$, the proof is complete.
\end{proof}

\section{Explicit solution}
In the present section we, will illustrate the theoretical results obtained with some numerical examples. In order to calculate the exact error we require the explicit exact solutions to Problem (\ref{probi}). For $\gamma=0$ there exist well known explicit, so we will consider the case $\gamma\neq 0$. We seek an explicit solution of the form
\begin{equation}\label{fvs}
u(x,t)=k(x)l(t).
\end{equation}
The first equation in (\ref{probi}) becomes
\begin{equation}\label{eqvs}
k(x)l'(t)-l^{2\gamma+1}(t)\left(\int_{\Omega}k^2(x)\ dx\right)^{\gamma} \Delta k(x)=f(x,t).
\end{equation}
If $l$ is chosen such that
\begin{equation}\label{eql}
l'(t)=-l^{2\gamma+1}(t)\Leftrightarrow l(t)=(2\gamma t-2\gamma C)^{-\frac{1}{2\gamma}},\, \gamma\neq 0,\, C\in\mathbbm{R},
\end{equation}
then (\ref{eqvs}) has the form
\begin{equation}\label{eqk}
k(x)+\left(\int_{\Omega}k^2(x)\ dx\right)^{\gamma} \Delta k(x)=\frac{f(x,t)}{-l^{2\gamma+1}(t)}.
\end{equation}
To obtain a function $k(x)$ which only depends on $x$, we must assume that
\begin{equation}\label{eqf}
\frac{f(x,t)}{-l^{2\gamma+1}(t)}=g(x)\Leftrightarrow f(x,t)=-g(x)l^{2\gamma+1}(t).
\end{equation}
In this case, let $w(x,\alpha)$ be such that
\begin{equation}\label{eqw}
w(x)+\alpha \Delta w(x)=g(x).
\end{equation}
Then
\begin{equation}\label{solk}
k(x)=w(x,\left(\int_{\Omega} w^2\ dx\right)^{\gamma})
\end{equation}
is a solution of (\ref{eqk}).
But (\ref{solk}) is defined in an implicit way. In order to obtain $k$ in an explicit form, we must solve the equation

\begin{equation}\label{eq_alpha}
\alpha=\left(\int_{\Omega}w^2(x,\alpha)\ dx\right)^{\gamma}.
\end{equation}
Collecting (\ref{solk}), (\ref{eql}) and (\ref{fvs}), we obtain an explicit solution for the first equation in (\ref{probi}).

\begin{nota}
The existence of conditions for the solvability of Equation (\ref{eq_alpha}) is under study.
\end{nota}

\subsection{One dimension}
For $d=1$, Equation (\ref{eqw}) becomes a linear second order ordinary differential equation
\begin{equation}\label{eqw1d}
w(x)+\alpha w''(x)=g(x).
\end{equation}
Since $\alpha>0$, the solution of the homogeneous equation $w(x)+\alpha w''(x)=0$ is
$$w_1(x)=C_1\sin(\frac{x}{\sqrt{\alpha}})+C_2\cos(\frac{x}{\sqrt{\alpha}}).$$
Using the variation-of-constants method, we will find a solution of the form
$$w(x)=v_1(x)\sin(\frac{x}{\sqrt{\alpha}})+v_2(x)\cos(\frac{x}{\sqrt{\alpha}})$$
satisfying
$$v_1'(x)\sin(\frac{x}{\sqrt{\alpha}})+v_2'(x)\cos(\frac{x}{\sqrt{\alpha}})=0,$$
and Equation (\ref{eqw1d}). So $v_1'$ and $v_2'$ are solutions of
$$\left[\begin{array}{cc}
\sin(\frac{x}{\sqrt{\alpha}})&\cos(\frac{x}{\sqrt{\alpha}})\\
\sqrt{\alpha}\cos(\frac{x}{\sqrt{\alpha}})&-\sqrt{\alpha}\sin(\frac{x}{\sqrt{\alpha}})
\end{array}\right]
\left[\begin{array}{c}
v_1'\\
v_2'
\end{array}\right]=
\left[\begin{array}{c}
0\\
g
\end{array}\right].$$
Solving this system, we obtain
$$v_1'(x)=\frac{1}{\sqrt{\alpha}}g(x)\cos(\frac{x}{\sqrt{\alpha}})\Rightarrow v_1(x)=C_1+\frac{1}{\sqrt{\alpha}}\int_0^xg(\xi)\cos(\frac{\xi}{\sqrt{\alpha}})\ d\xi$$
and
$$v_2'(x)=-\frac{1}{\sqrt{\alpha}}g(x)\sin(\frac{x}{\sqrt{\alpha}})\Rightarrow v_2(x)=C_2-\frac{1}{\sqrt{\alpha}}\int_0^xg(\xi)\sin(\frac{\xi}{\sqrt{\alpha}})\ d\xi.$$
Finally, if $g$ is continuous in $\Omega$, then Equation (\ref{eqw1d}) admits the solution
$$w(x)= \left(C_1+\frac{1}{\sqrt{\alpha}}\int_0^xg(\xi)\cos(\frac{\xi}{\sqrt{\alpha}})\ d\xi\right) \sin(\frac{x}{\sqrt{\alpha}})$$
\begin{equation}\label{sol_expl_w}
\phantom{w(x)= }+\left(C_2-\frac{1}{\sqrt{\alpha}}\int_0^xg(\xi)\sin(\frac{\xi}{\sqrt{\alpha}})\ d\xi\right) \cos(\frac{x}{\sqrt{\alpha}}).
\end{equation}
\begin{nota}
Constants $C$, $C_1$, $C_2$ and $\alpha$ must be chosen in such a way that $u$ satisfies the initial data, the boundary conditions and Equation (\ref{eq_alpha}).
\end{nota}

\subsection{Two dimensions}
If we assume that $f=0$, then, in 2D space domains, Equation (\ref{eqw}) becomes
$$w(x,y)+\alpha\left(\pd{^2w}{x^2}+\pd{^2w}{y^2}\right)=0.$$
Searching again for a solution with separate variables, that is,
$w(x,y)=X(x)Y(y),$
we obtain the equation
$$X(x)Y(y)+\alpha(X''(x)Y(y)+X(x)Y''(y))=0.$$
Then $X$ and $Y$ satisfy the condition
$$-\alpha\frac{X''(x)}{X(x)}=\frac{Y(y)+\alpha Y''(y)}{Y(y)}=\lambda=constant.$$
For $X$, we need to solve the second order linear equation
\begin{equation}\label{eqX}
-\lambda X(x)-\alpha X''(x)=0.
\end{equation}
If $\lambda>0$, then, since $\alpha>0$,  Equation (\ref{eqX}) has the solution
$$X(x)=A_1 \cos\left(\sqrt{\frac{\lambda}{\alpha}}x\right)+A_2 \sin\left(\sqrt{\frac{\lambda}{\alpha}}x\right).$$
The equation for $Y$ is
\begin{equation}\label{eqY}
(1-\lambda) Y(y)+\alpha Y''(y)=0.
\end{equation}
If $\lambda<1$, then, since $\alpha>0$, Equation (\ref{eqY}) has the solution
$$Y(y)=B_1 \cos\left(\sqrt{\frac{1-\lambda}{\alpha}}y\right)+B_2 \sin\left(\sqrt{\frac{1-\lambda}{\alpha}}y\right).$$
Then
$$w(x,y)=\left(A_1 \cos\left(\sqrt{\frac{\lambda}{\alpha}}x\right)+A_2 \sin\left(\sqrt{\frac{\lambda}{\alpha}}x\right)\right)$$
\begin{equation}\label{sol_w2d}
 \times\left(B_1 \cos\left(\sqrt{\frac{1-\lambda}{\alpha}}y\right)+B_2 \sin\left(\sqrt{\frac{1-\lambda}{\alpha}}y\right)\right)
\end{equation}
\begin{nota}
Constants $C$, $A_1$, $A_2$, $B_1$, $B_2$ and $\alpha$ must be chosen in such a way that $u$ satisfies the initial data, the boundary conditions and Equation (\ref{eq_alpha}).
\end{nota}

\subsection{Solution analysis}
Now it is interesting to analyse the type of solutions and their behaviour. First, we define the positive part of a function $f$ as
$$[f]_+=\left\{\begin{array}{ll}
f,&f>0\\
0,&f\leq0
\end{array}\right..$$
If $\gamma>0$ and
$$f(x,t)=-\frac{g(x)}{(2\gamma t-2\gamma C)^{\frac{1+2\gamma}{2\gamma}}},$$
then the solution is
$$u(x,t)=\frac{k(x)}{(2\gamma t-2\gamma C)^{\frac{1}{2\gamma}}}.$$
The constant $C$ is determined by $u_0$ and, in the case $g\neq 0$, by $f$. If $f$ is integrable in $\Omega$, then $k(x)$ is bounded. Considering $C=0$, we obtain a solution defined for $t>0$ and infinity at $t=0$, that is, we have a source type solution. If $C<0$, then $u$ is defined  in $t\geq 0$, but,
on the other hand, if $C>0$, both $f$ and $u$ are only defined for $t>C$. Making the change of variable $\tau=t+C+\varepsilon$, $\varepsilon>0$, we arrive at a solution similar to that in the case $C>0$. In each case, the solution tends to zero as time tends to infinity, as was proved in Theorem 18 in \cite{AAD14ppb}.\\
If $-\frac12<\gamma<0$ and
$$f(x,t)=-g(x)[2|\gamma| (C-t)]_+^{\frac{1+2\gamma}{2|\gamma|}},$$
then the corresponding solution is
$$u(x,t)=k(x)[2|\gamma| (C-t)]_+^{\frac{1}{2|\gamma|}}.$$
If $C\leq 0$, then $f$ and $u$ are zero for $t\geq0$. The case $C>0$ is more interesting because they are defined in $t\geq0$, but they are non zero only for $t\in[0,C[$, hence we have a finite time extinction phenomenon, like it was proved in Theorem 19 in \cite{AAD14ppb}.\\
In the case $\gamma=-\frac12$, which corresponds to $a(u)=\frac{1}{\|u\|}$, the solution is
$$u(x,t)=k(x)[C-t]_+,$$
when $f$ does not depend on $t$. Choosing $C\leq0$ the solution is zero for $t\geq0$, but choosing $C>0$, $u_0$ is nonzero and $u$ becomes extinct at $t=C$, even with a function $f$ that does not gets extinct. This example does not contradict Theorem 19 in \cite{AAD14ppb}.\\
For $\gamma<-\frac12$, the solution exhibits a curious behaviour. Indeed, if
$$f(x,t)=-\frac{g(x)}{[2|\gamma| (C-t)]_+^{\left|\frac{2\gamma+1}{2\gamma}\right|}},$$
then the solution is
$$u(x,t)=k(x)[2|\gamma| (C-t)]_+^{\frac{1}{2|\gamma|}}.$$
In the case $C>0$, we can observe an extinction of the solution at $t=C$, despite the fact that the function $f$ tends to infinity as $t$ tends to $C$.
\begin{nota}
If, in (\ref{eql}), we choose $l$ such that
$$l'(t)=l^{2\gamma+1}\Leftrightarrow l(t)=(-2\gamma t-2\gamma C)^{-\frac{1}{2\gamma}},$$
then the solution does not have the behaviour proved in \cite{AAD14ppb}. For example, in the one dimensional case, if $f=0$ and $\gamma>0$, then
$$u(x,t)=\frac{C_1 e^{\frac{x}{\sqrt{\alpha}}}+C_2 e^{-\frac{x}{\sqrt{\alpha}}}}{(-2\gamma t-2\gamma C)^{\frac{1}{2\gamma}}}$$
is a solution, but it blows up in finite time, which contradicts Theorem 18 in \cite{AAD14ppb}.
\end{nota}

\subsection{Example 1}
We consider Problem (\ref{probi}) in $]0,1[$ with $\gamma=\frac15$ and  $f(x,t)=\frac{x^2}{(t+1)^2}$, that is,
\begin{equation}\label{prob1}
\left\{
\begin{array}{l}
\displaystyle u_t- \left(\int_0^1u^2\ dx\right)^{\frac12}u_{xx}=\frac{x^2}{(t+1)^2}\,, \quad (x,t)\in ]0,1[\times ]0,10],\\
\displaystyle u(0,t)=u(1,t)=0\,,\quad t\in ]0,10], \\
\displaystyle u(x,0)=u_{0}(x)\,, \quad x\in ]0,1[. \\
\end{array}
\right. \,
\end{equation}
Since $\gamma=\frac12$, by Equation (\ref{eql}), we have
$$l(t)=(t-C)^{-1},$$
and, since $f(x,t)=\frac{x^2}{(t+1)^2}$, by (\ref{eqf}), we must consider $g(x)=-x^2$ and $C=-1$.
The equation that $w(x)$ must satisfy is
$$w(x)+\alpha w''(x)=-x^2,$$
and its solution is
$$w(x)= \left(C_1+\frac{1}{\sqrt{\alpha}}\int_0^x-\xi^2\cos(\frac{\xi}{\sqrt{\alpha}})\ d\xi\right) \sin(\frac{x}{\sqrt{\alpha}})+$$
$$\phantom{w(x)= }\left(C_2-\frac{1}{\sqrt{\alpha}}\int_0^x-\xi^2\sin(\frac{\xi}{\sqrt{\alpha}})\ d\xi\right) \cos(\frac{x}{\sqrt{\alpha}})$$
$$=C_1\sin(\frac{x}{\sqrt{\alpha}})+(C_2-2\alpha)\cos(\frac{x}{\sqrt{\alpha}})-x^2+2\alpha.$$
Thus
$$u(x,t)=\frac{C_1\sin(\frac{x}{\sqrt{\alpha}})+(C_2-2\alpha)\cos(\frac{x}{\sqrt{\alpha}})-x^2+2\alpha}{t+1}.$$
Imposing the boundary conditions, we obtain the values of $C_1$ and $C_2$ as
$$u(0,t)=0\Rightarrow C_2-2\alpha+2\alpha=0\Rightarrow C_2=0,$$
$$u(1,t)=0\Rightarrow C_1\sin(\frac{1}{\sqrt{\alpha}})-2\alpha \cos(\frac{1}{\sqrt{\alpha}})-1+2\alpha=0$$
$$\Rightarrow C_1=\frac{1-2\alpha+2\alpha\cos(\frac1{\sqrt{\alpha}})}{\sin(\frac1{\sqrt{\alpha}})}.$$
To finalise this procedure, we only need to solve Equation (\ref{eq_alpha}) which, in this case, is
$$\alpha=\left(\int_0^1\left(\frac{1-2\alpha+2\alpha\cos(\frac1{\sqrt{\alpha}})}{\sin(\frac1{\sqrt{\alpha}})}\sin(\frac{x}{\sqrt{\alpha}})-2\alpha \cos(\frac{x}{\sqrt{\alpha}})-x^2+2\alpha\right)^2\ dx\right)^{\frac12}$$

$=G_1(\alpha).$\\
This equation has one solution in the interval $[0.1,0.3]$, as we can see in Figure \ref{alpha1}. Solving this equation we obtain $\alpha=0.223688785954835$, with absolute error less than $10^{-16}$.
So the required solution is
$$u(x,t)=\left(\frac{1-2\alpha+2\alpha\cos(\frac1{\sqrt{\alpha}})}{\sin(\frac1{\sqrt{\alpha}})}
\sin(\frac{x}{\sqrt{\alpha}})-2\alpha\cos(\frac{x}{\sqrt{\alpha}})-x^2+2\alpha\right)(t+1)^{-1},$$
$$\alpha=0.223688785954835.$$
\begin{center}
\begin{figure}[!htb]
\begin{minipage}[t]{0.45\linewidth}
\centering
\includegraphics[width=0.8\textwidth]{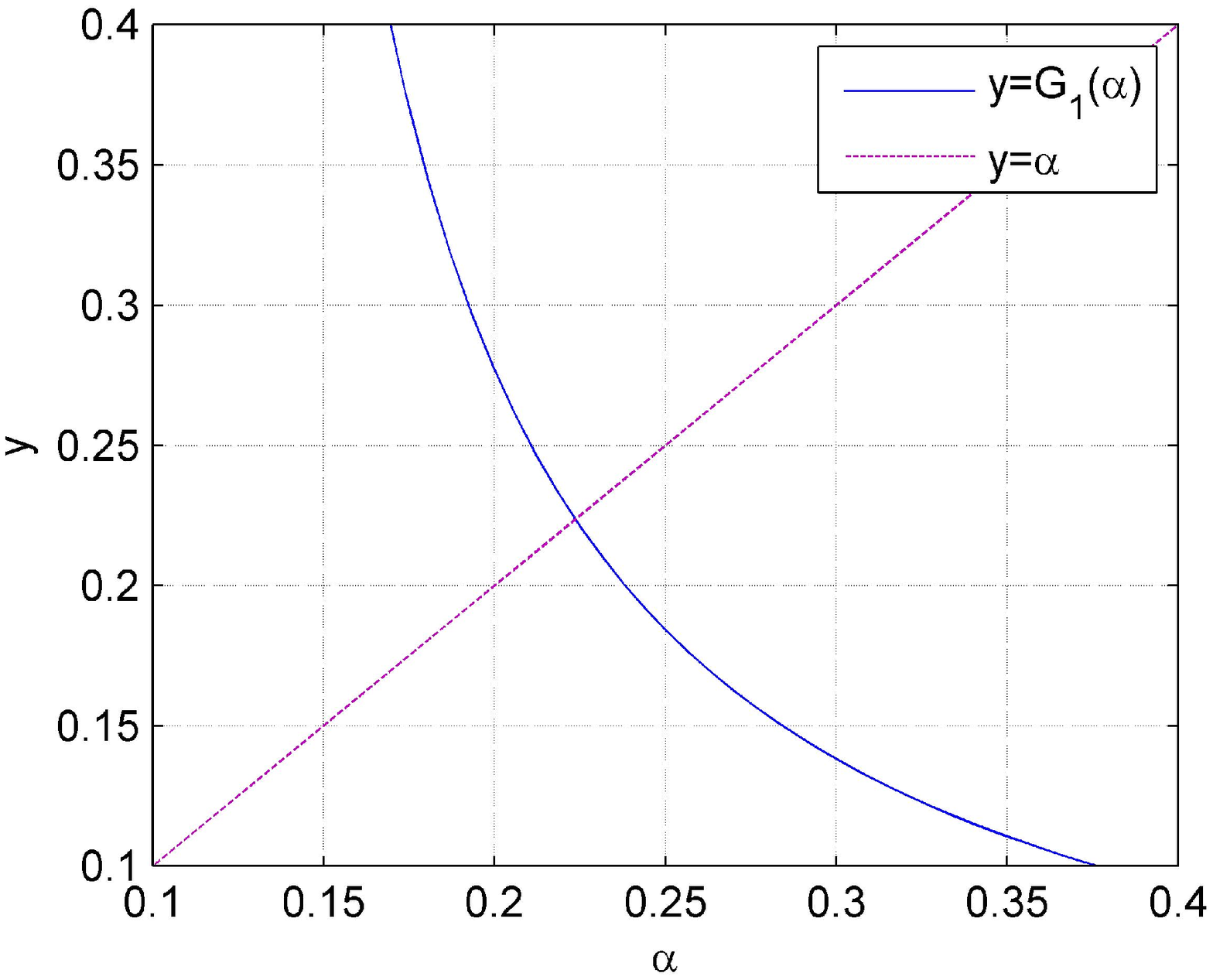}
\caption{Plot of the functions \mbox{$y=\alpha$} and $y=G_1(\alpha)$ for \mbox{example 1}.}\label{alpha1}
\end{minipage}\hfill
\begin{minipage}[t]{0.45\linewidth}
\centering
\includegraphics[width=0.8\textwidth]{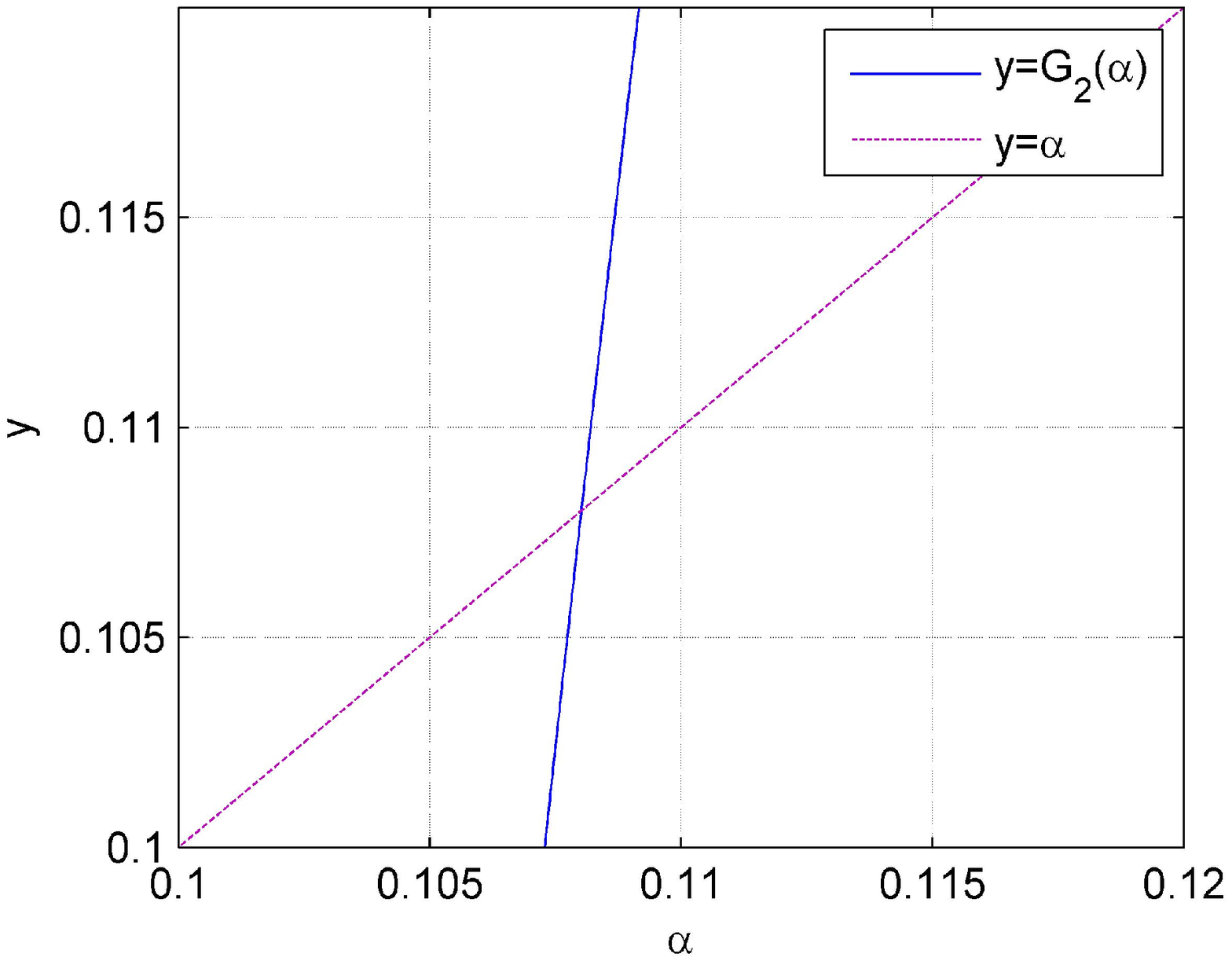}
\caption{Plot of the functions \mbox{$y=\alpha$} and $y=G_2(\alpha)$ for \mbox{example 2}.}\label{alpha2}
\end{minipage}
\end{figure}
\end{center}

\subsection{Example 2}
We now consider Problem (\ref{probi}) in $]0,1[$ with $\gamma=-\frac13$ and $f=e^{x}\sqrt{1-t}$,
\begin{equation}  \label{prob2}
\left\{
\begin{array}{l}
\displaystyle u_t- \left(\int_0^1u^2\ dx\right)^{-\frac13}u_{xx}=e^{x}\sqrt{1-t}\,, \quad (x,t)\in ]0,1[\times ]0,1],\\
\displaystyle u(0,t)=u(1,t)=0\,,\quad t\in ]0,1], \\
\displaystyle u(x,0)=u_{0}(x)\,, \quad x\in ]0,1[. \\
\end{array}
\right. \,
\end{equation}
For $\gamma=-\frac13$
$$l(t)=\left(-\frac23t+\frac23C\right)^{\frac32},$$
and so
$$f(x,t)=-g(x)\left(-\frac23t+\frac23C\right)^{\frac12}=e^{x}\sqrt{1-t}.$$
Thus $C=1$ and $g(x)=-\sqrt{\frac32}e^x$.
Solving the equation
$$w(x)+\alpha w''(x)=-\sqrt{\frac32}e^x,$$
we obtain the solution
$$w(x)=\left(C_1+\frac{\sqrt{\alpha}\sqrt{\frac32}}{\alpha+1}\right)\sin\left(\frac{x}{\sqrt{\alpha}}\right)+
\left(C_2+\frac{\sqrt{\frac32}}{\alpha+1}\right)\cos\left(\frac{x}{\sqrt{\alpha}}\right)-\frac{\sqrt{\frac32}}{\alpha+1}e^x$$
By the boundary conditions,
$$u(0,t)=0\Rightarrow C_2=0,$$
and\\
$$u(1,t)=0\Rightarrow C_1=\frac{e-\sqrt{\alpha}\sin\left(\frac{1}{\sqrt{\alpha}}\right)-\cos\left(\frac{1}{\sqrt{\alpha}}\right)}
{(\alpha+1)\sqrt{\frac23}\sin\left(\frac{1}{\sqrt{\alpha}}\right)}.$$
The equation for $\alpha $ is
$$\alpha=\left(\int_0^1\left(\frac{e-\sqrt{\frac32}\cos(\frac1{\sqrt{\alpha}})}{(\alpha+1)\sin(\frac1{\sqrt{\alpha}})}\sin(\frac{x}{\sqrt{\alpha}})+
\frac{\sqrt{\frac32}}{\alpha+1} \cos(\frac{x}{\sqrt{\alpha}})-\frac{\sqrt{\frac32}}{\alpha+1}e^x\right)^2\ dx\right)^{\frac12}$$

$=G_2(\alpha).$\\
As we can see in Figure \ref{alpha2}, this equation has one solution in the interval $[0.1,0.12]$.
Solving this equation, with the absolute error less than $10^{-16}$, we obtain
$$\alpha=0.108016681670528.$$
Hence the solution we were looking for is
$$u(x,t)=\left(\frac{e-\sqrt{\frac32}\cos(\frac1{\sqrt{\alpha}})}{(\alpha+1)\sin(\frac1{\sqrt{\alpha}})}\sin(\frac{x}{\sqrt{\alpha}})+
\frac{\sqrt{\frac32}}{\alpha+1} \cos(\frac{x}{\sqrt{\alpha}})-\frac{\sqrt{\frac32}}{\alpha+1}e^x\right)$$
$$\times\left(-\frac23t+\frac23\right)^{\frac32},$$
with $\alpha=0.108016681670528$.

\subsection{Example 3}
As a another example, we choose a 2D  problem, namely Problem (\ref{probi}) in $\Omega=]0,1[^2$ with $\gamma=2$ and $f=0$,
\begin{equation}  \label{prob3}
\left\{
\begin{array}{l}
\displaystyle u_t- \left(\int_{\Omega}u^2\ dx dy\right)^{2}\Delta u=0\,, \quad (x,t)\in \Omega\times ]0,1],\\
\displaystyle u(x,t)=0\,,\quad (x,t)\in \partial \Omega\times]0,1], \\
\displaystyle u(x,0)=u_{0}(x)\,, \quad x\in \Omega.\\
\end{array}
\right. \,
\end{equation}
For $\gamma=2$, $l(t)=(4t-4C)^{-\frac{1}{4}}$, and $g(x,y)=0$ because $f(x,y,t)=0$.
The factor $w(x,y)$ is defined by Equation (\ref{sol_w2d}) and, by
the boundary conditions,
$$u(0,y,t)=0\Rightarrow X(0)=0 \Rightarrow A_1=0,$$
$$u(x,0,t)=0\Rightarrow Y(0)=0 \Rightarrow B_1=0,$$
$$u(1,y,t)=0\Rightarrow X(1)=0 \Rightarrow A_2\sin\left(\sqrt{\frac{\lambda}{\alpha}}\right)=0 \Leftarrow \lambda=\pi^2\alpha,$$
$$u(x,1,t)=0\Rightarrow Y(1)=0 \Rightarrow B_2\sin\left(\sqrt{\frac{1-\pi^2\alpha}{\alpha}}\right)=0 \Leftarrow \alpha=\frac{1}{2\pi^2}.$$
Notice that $0<\lambda<1$, as assumed in Section 5.2. Therefore
$$w(x,y)=-A_2B_2\sin(\pi x)\cos(\pi y)=C_3\sin(\pi x)\cos(\pi y).$$
Using Equation (\ref{eq_alpha}) to calculate $C_3$, we obtain
$$\frac{1}{2\pi^2}=\left(\int_0^1\int_0^1 (C_3\sin(\pi x)\cos(\pi y))^2\ dxdy\right)^2\Leftrightarrow \frac{1}{2\pi^2}=\frac{C_3^4}{16}\Leftrightarrow C_3=\sqrt[4]{\frac{8}{\pi^2}}.$$
Finally,
$$u(x,y,t)=\frac{\sqrt[4]{\frac{8}{\pi^2}}\sin(\pi x)\sin(\pi y) }{\sqrt[4]{4t-4C}}.$$
\section{Numerical simulations}
\subsection{Example 1}
Consider the problem
\begin{equation*}
\left\{
\begin{array}{l}
\displaystyle u_t- \left(\int_0^1u^2\ dx\right)^{\frac12}u_{xx}=\frac{x^2}{(t+1)^2}\,, \quad (x,t)\in ]0,1[\times ]0,10],\\
\displaystyle u(0,t)=u(1,t)=0\,,\quad t\in ]0,10], \\
\displaystyle u(x,0)=u_0(x)\,, \quad x\in ]0,1[, \\
\end{array}
\right. \,
\end{equation*}
with $$u_0=\frac{1-2\alpha+2\alpha\cos(\frac1{\sqrt{\alpha}})}{\sin(\frac1{\sqrt{\alpha}})}
\sin(\frac{x}{\sqrt{\alpha}})-2\alpha\cos(\frac{x}{\sqrt{\alpha}})-x^2+2\alpha,$$
and $\alpha=0.223688785954835.$
In Figure \ref{ex1_sol}, we show the solution for $h=10^{-2}$, $\delta=10^{-3}$ and $k=2$. As expected, we can observe the decay of the solution as time increases.
\begin{center}
\begin{figure}[!htb]
\begin{minipage}[t]{0.30\linewidth}
\centering
\includegraphics[width=0.9\textwidth]{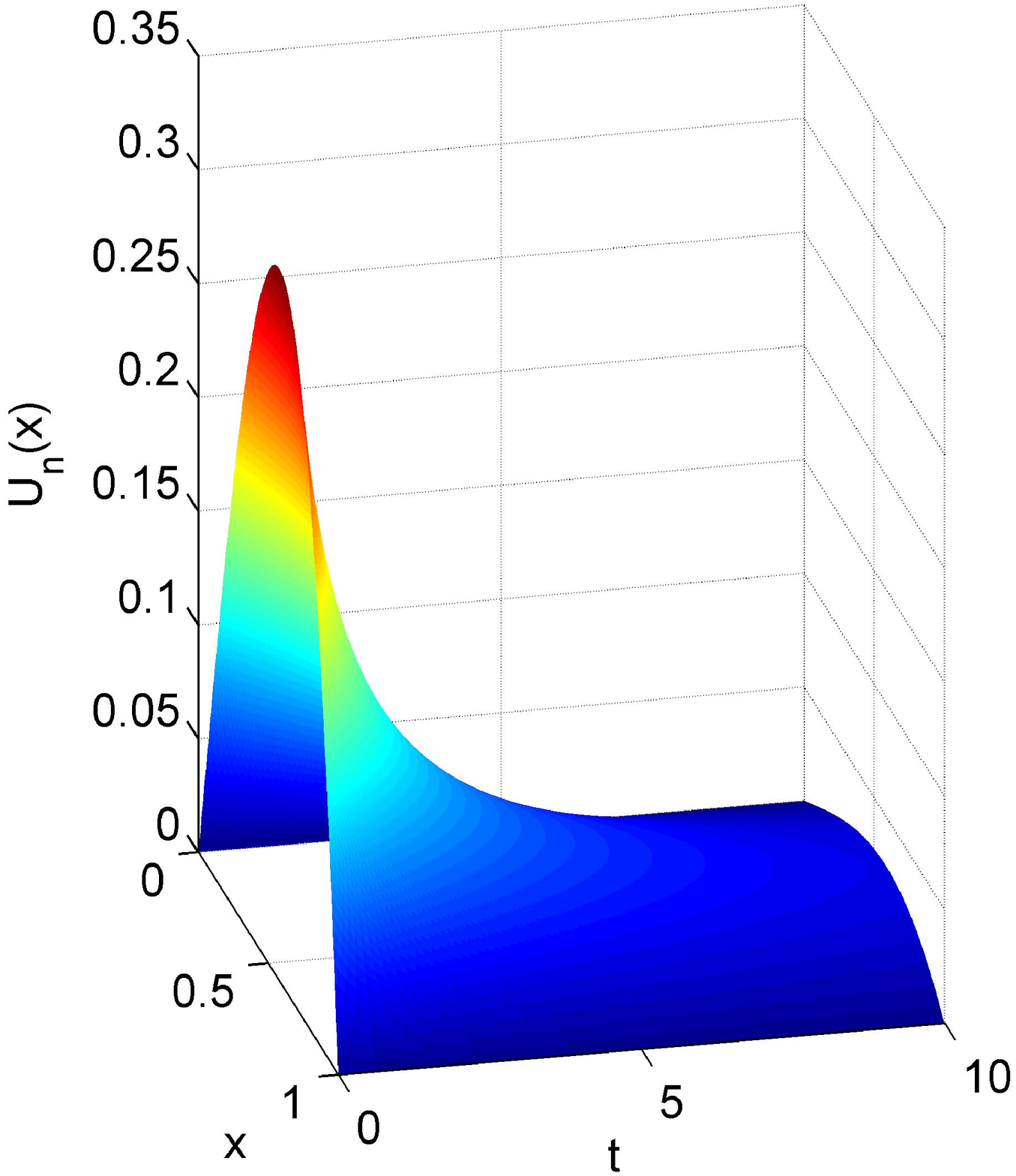}
\caption{Evolution of the obtained solution for $h=10^{-2}$, $\delta=10^{-3}$ and $k=2$.}\label{ex1_sol}
\end{minipage}\hfill
\begin{minipage}[t]{0.30\linewidth}
\centering
\includegraphics[width=0.9\textwidth]{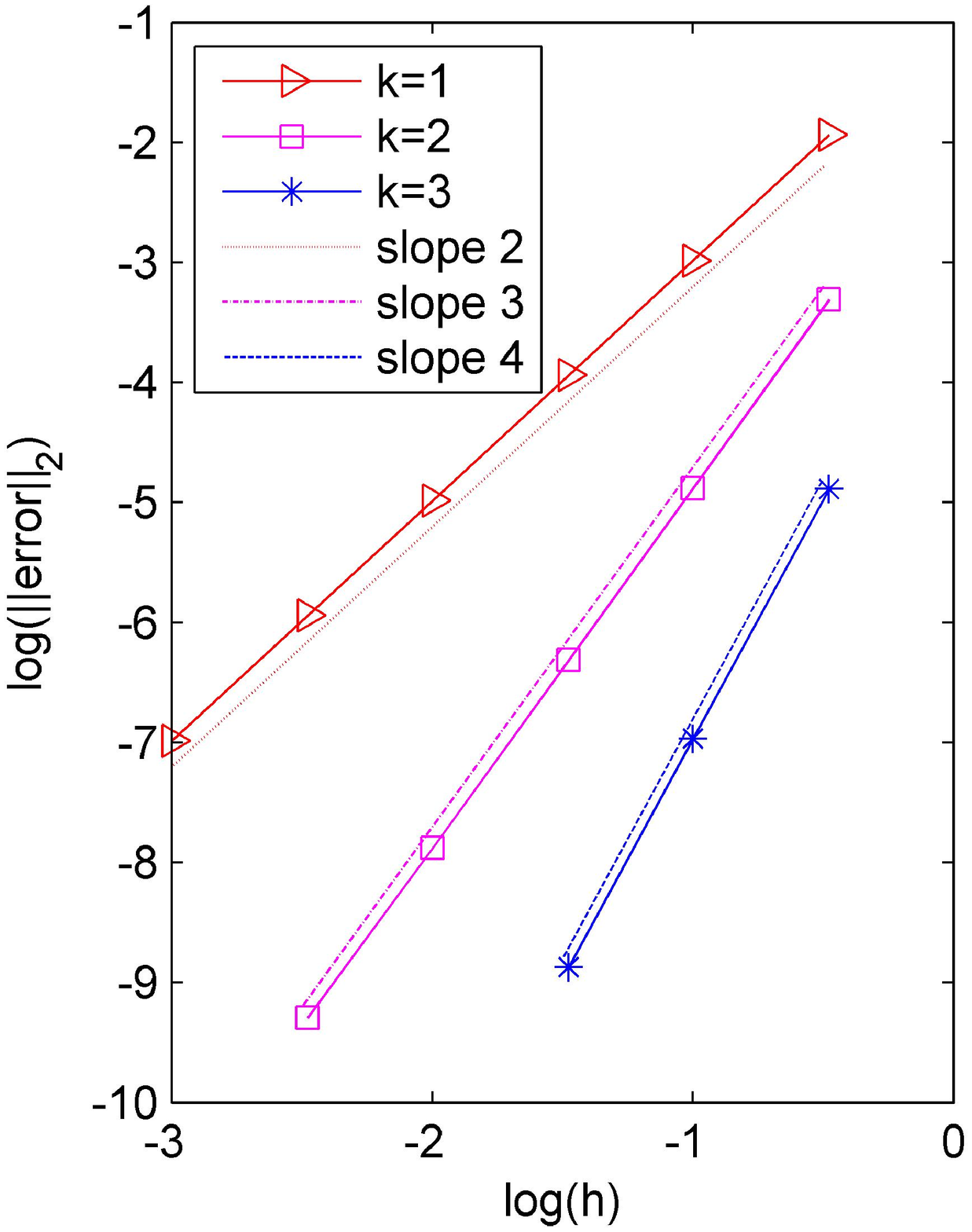}
\caption{Study of convergence for $h$, in example 1.}\label{ex1_O_h}
\end{minipage}\hfill
\begin{minipage}[t]{0.30\linewidth}
\centering
\includegraphics[width=0.9\textwidth]{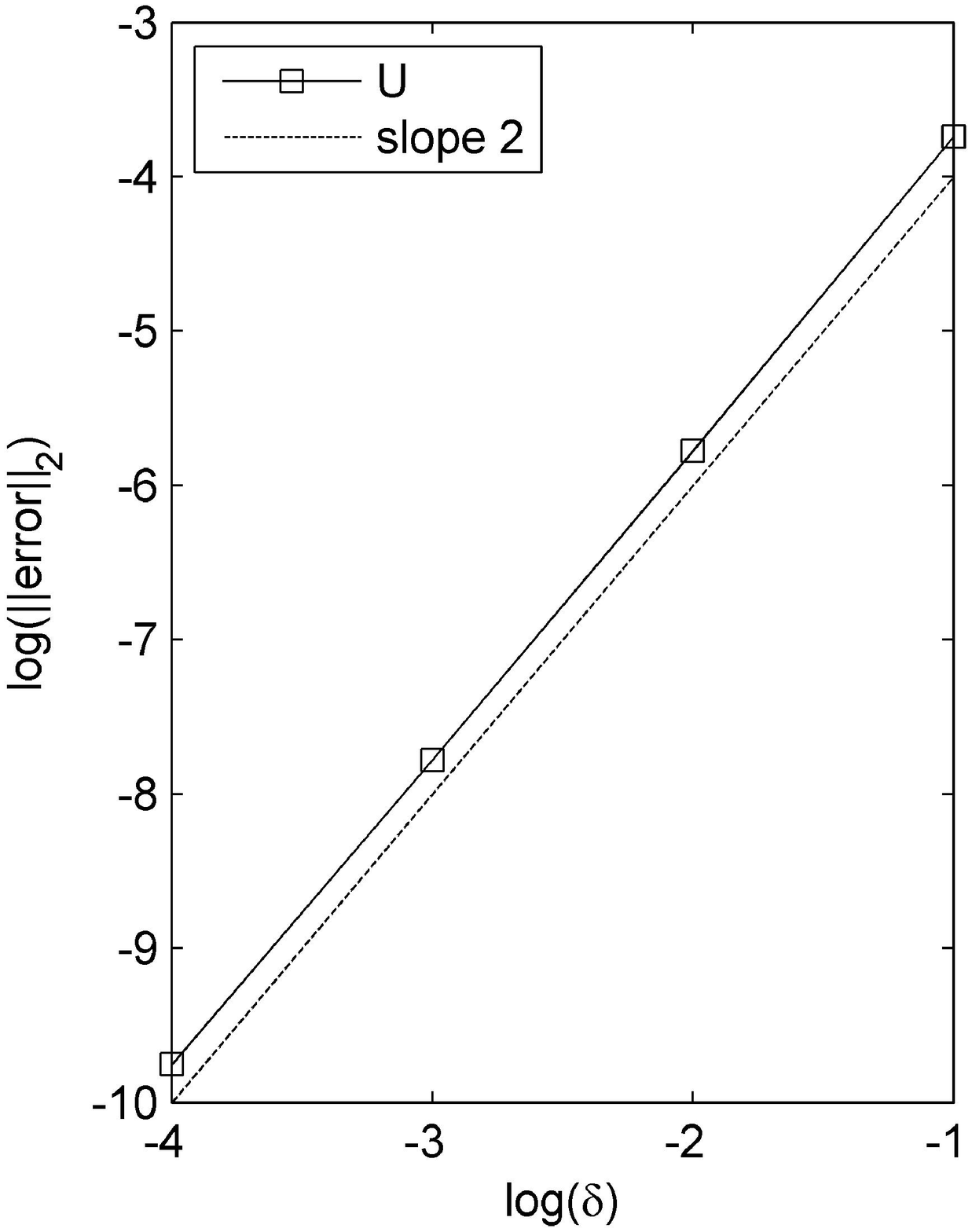}
\caption{Study of convergence for $\delta$, in example 1.}\label{ex1_O_d}
\end{minipage}
\end{figure}
\end{center}
In order to study the order of convergence for $h$, we made several runs with different combinations of $h$ and $k$. Since we know the exact solution, we calculated the $L_2$ norm of the errors at $t=10$, and we plotted the logarithm of the errors versus the logarithm of $h$ in Figure \ref{ex1_O_h}. It is evident that the second order of convergence for $k$ equals one, the third order for $k$ equal two and a fourth order for $k$ equals three. We repeated the procedure for $\delta$, which is illustrated in Figure  \ref{ex1_O_d}, and we concluded the second order of convergence, as expected.

\subsection{Example 2}
Consider Problem (\ref{prob2})
\begin{equation*}
\left\{
\begin{array}{l}
\displaystyle u_t- \left(\int_0^1u^2\ dx\right)^{-\frac13}u_{xx}=e^{x}\sqrt{[1-t]_+}\,, \quad (x,t)\in ]0,1[\times ]0,2],\\
\displaystyle u(0,t)=u(1,t)=0\,,\quad t\in ]0,1], \\
\displaystyle u(x,0)=u_0\,, \quad x\in ]0,1[, \\
\end{array}
\right. \,
\end{equation*}
with
$$u_0(x)=\left(\frac{e-\sqrt{\frac32}\cos(\frac1{\sqrt{\alpha}})}{(\alpha+1)\sin(\frac1{\sqrt{\alpha}})}\sin(\frac{x}{\sqrt{\alpha}})+
\frac{\sqrt{\frac32}}{\alpha+1} \cos(\frac{x}{\sqrt{\alpha}})-\frac{\sqrt{\frac32}}{\alpha+1}e^x\right)\left(\frac23\right)^{\frac32},$$
and $\alpha=0.108016681670528$.
In Figure \ref{ex2_sol}, we show the solution obtained for $h=10^{-2}$, $\delta=10^{-3}$ and $k=2$. As expected we can observe an extinction in $t=1$. This effect is more evident in the graph of Figure \ref{comp_ass}, where we plotted the energetic function $\log(\int_{\Omega} U(x,t)^2\ dx)$ for the three examples.
\begin{center}
\begin{figure}[!htb]
\centering
\includegraphics[width=0.3\textwidth]{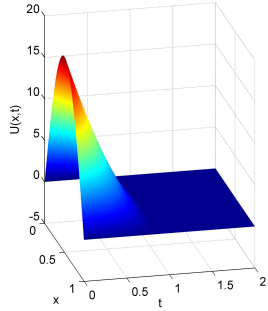}
\caption{Evolution in time of the solution obtained for $h=10^{-2}$, $\delta=10^{-3}$ and $k=2$.}\label{ex2_sol}
\end{figure}
\end{center}

\subsection{Example 3}
Consider Problem (\ref{prob3}) with $C=-\frac14$,
\begin{equation*}
\left\{
\begin{array}{l}
\displaystyle u_t- \left(\int_{\Omega}u^2\ dx dy\right)^{2}\Delta u=0\,, \quad (x,t)\in \Omega\times ]0,1],\\
\displaystyle u(x,t)=0\,,\quad (x,t)\in \partial \Omega\times]0,1], \\
\displaystyle u(x,0)=\sqrt[4]{\frac{8}{\pi^2}}\sin(\pi x)\sin(\pi y)\,, \quad x\in \Omega.\\
\end{array}
\right. \,
\end{equation*}
This problem was simulated with polynomial approximations of degree 3 in $x$ and $y$. In Figure \ref{ex3_sol}, we plotted the solution obtained, when $h=0.0625$ and $\delta=10^{-2}$, for some values of $t$. As expected, the solution decays with time.\\
\begin{center}
\begin{figure}
\centering
\includegraphics[width=0.3\textwidth]{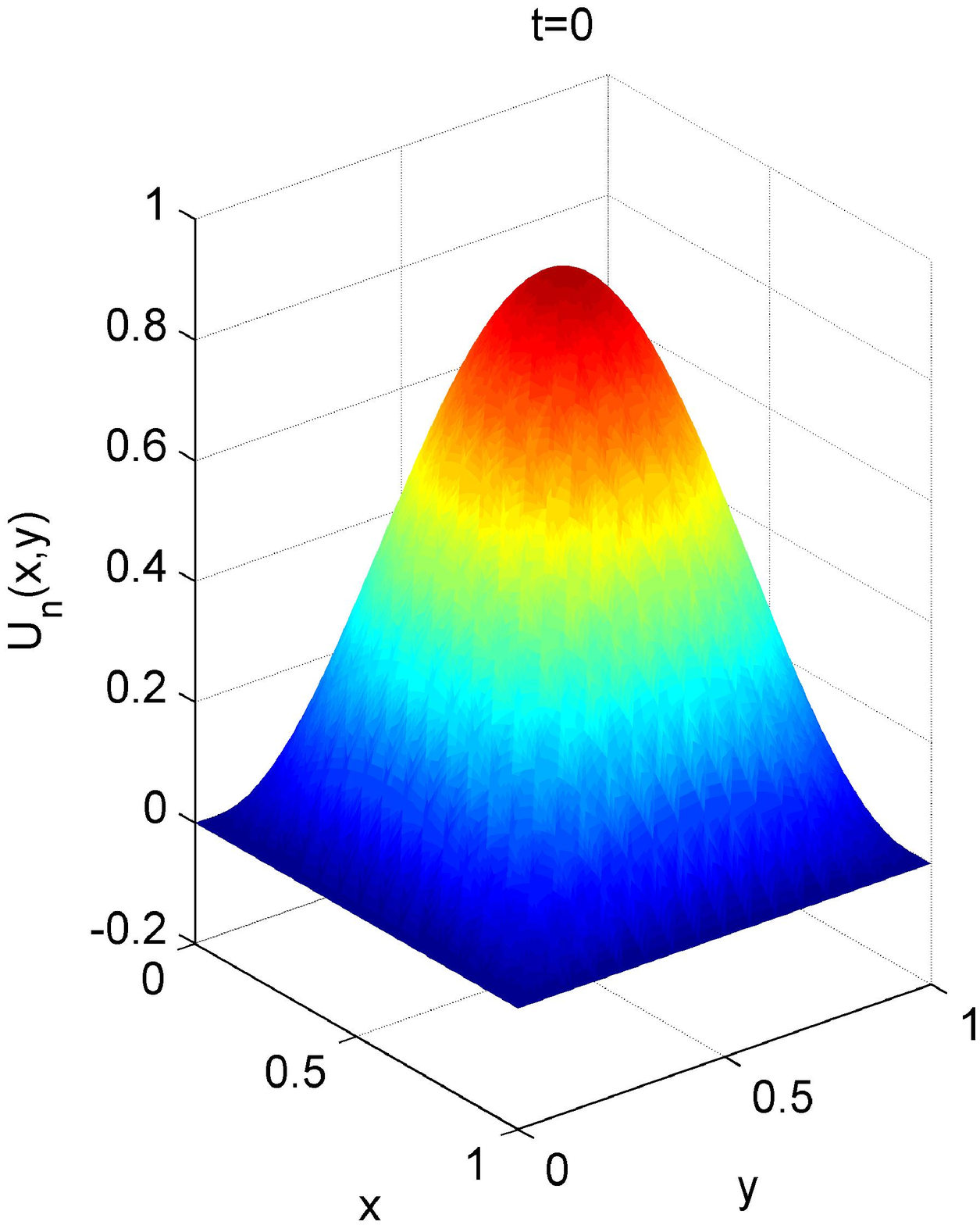}
\includegraphics[width=0.3\textwidth]{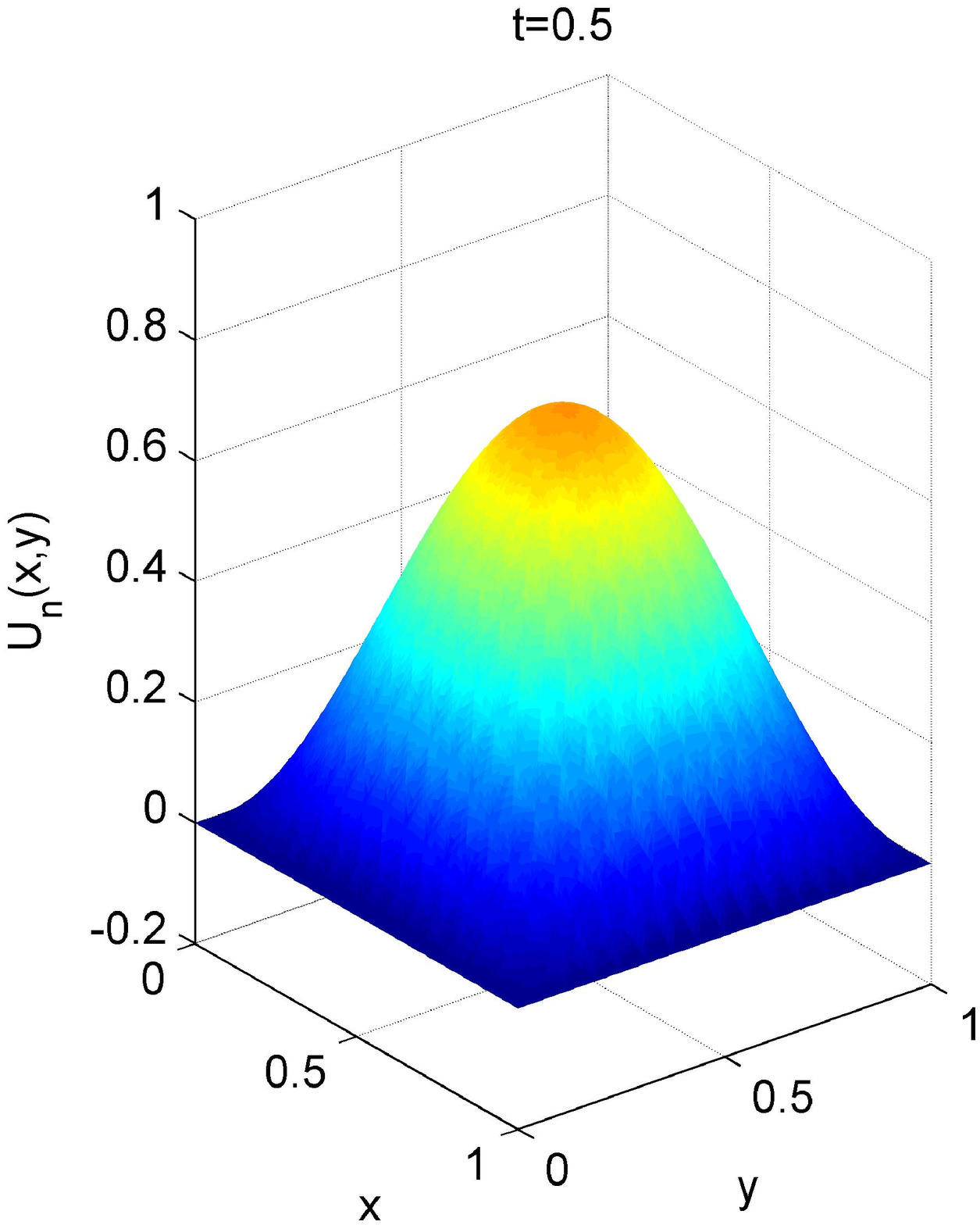}
\includegraphics[width=0.3\textwidth]{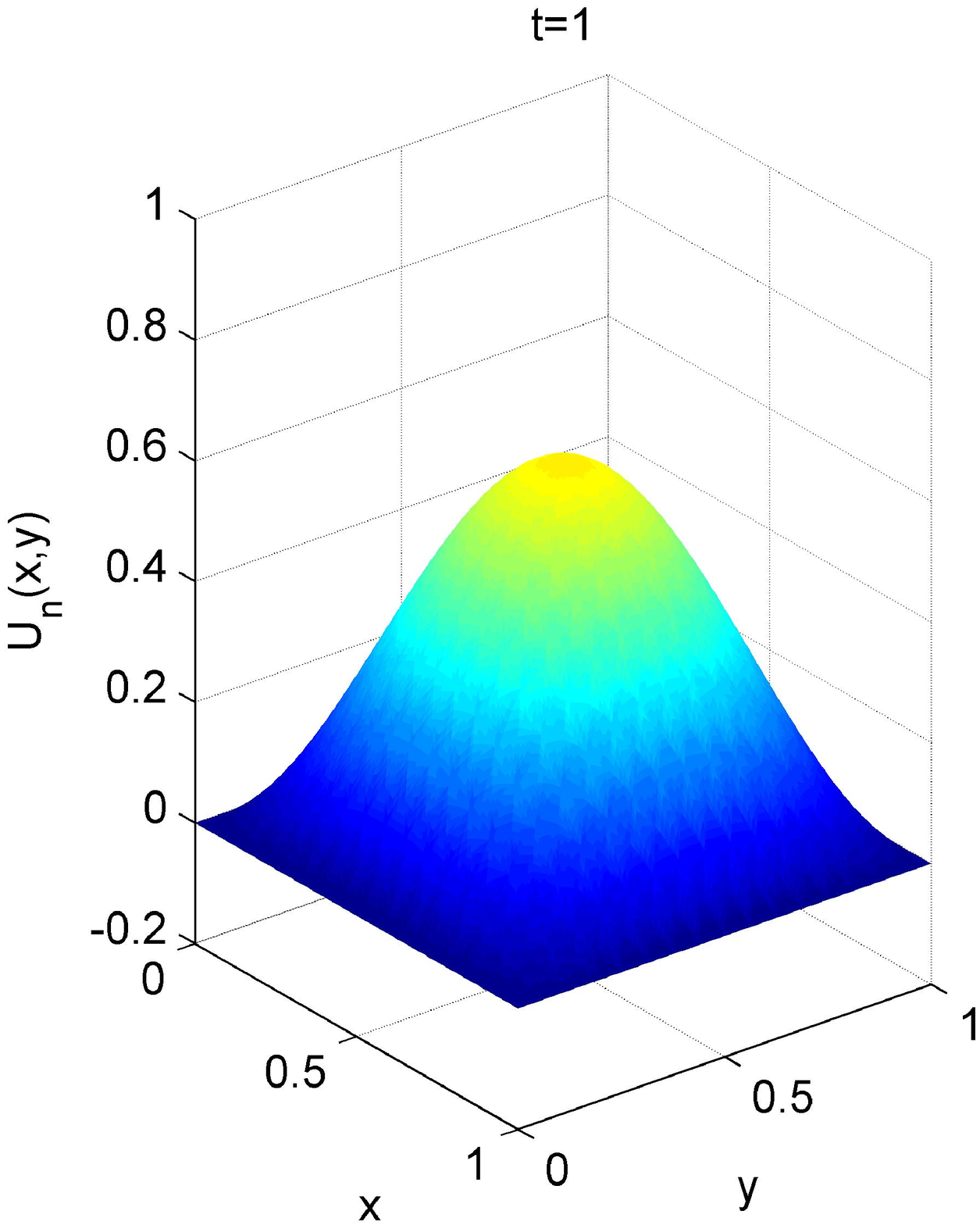}
\caption{The obtained solution in some values of $t$.}\label{ex3_sol}
\end{figure}
\end{center}
In the same way as in Example 1, we made a study of the numerical convergence in $h$ and $\delta$. The results of the several runs made with different combinations of  $k$, $h$ and $\delta$ are plotted in Figures \ref{ex3_O_h} and \ref{ex3_O_d}. The existence of a known exact solution permitted us to calculate the norm in $L_2(\Omega)$ of the errors at $t=1$. The analysis of the figures agrees with the convergence orders proved in Theorem \ref{conv_d}.
\begin{center}
\begin{figure}[!htb]
\begin{minipage}[t]{0.3\linewidth}
\centering
\includegraphics[width=0.9\textwidth]{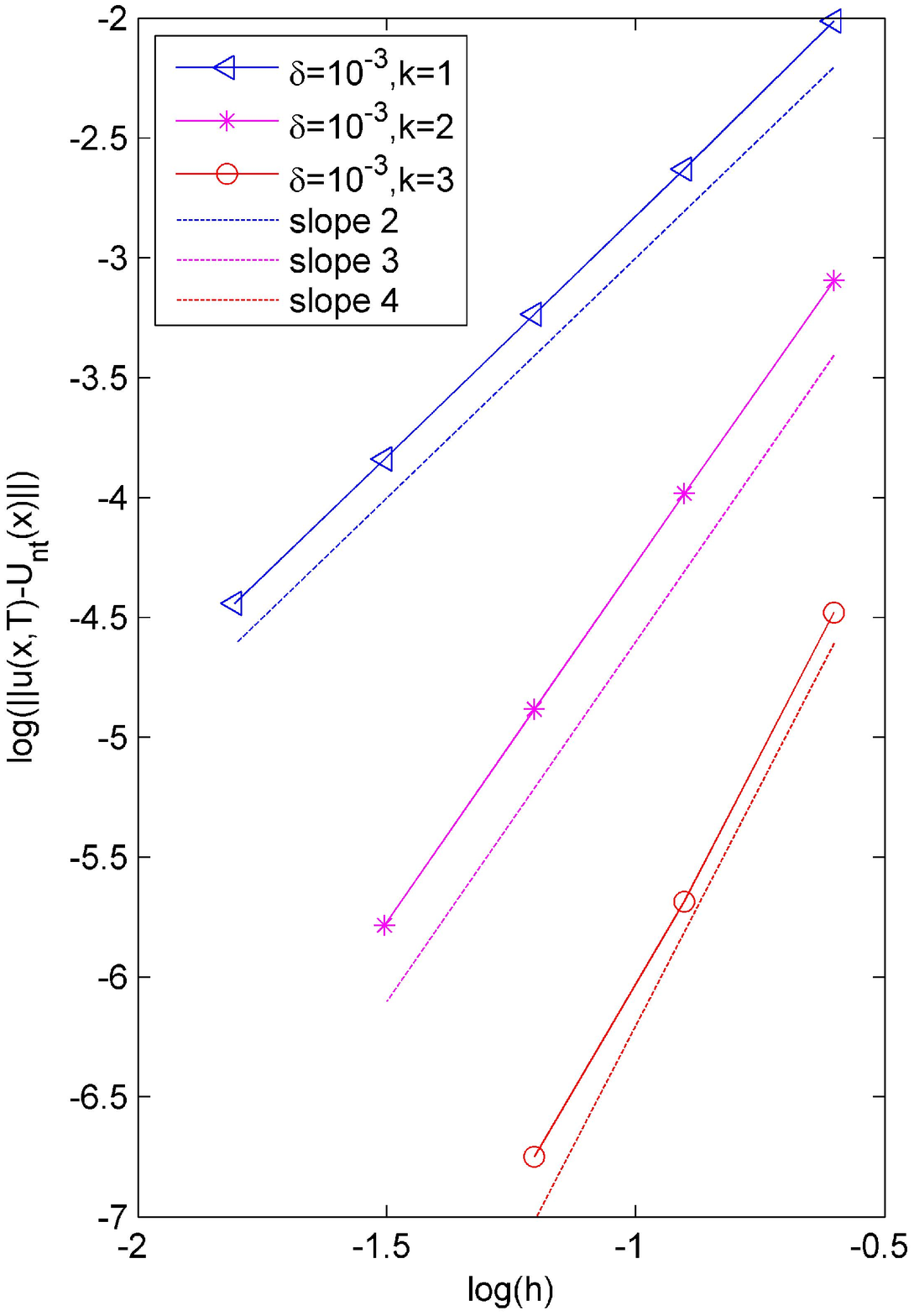}
\caption{Study of convergence for $h$, in Example 3.}\label{ex3_O_h}
\end{minipage}\hfill
\begin{minipage}[t]{0.3\linewidth}
\centering
\includegraphics[width=0.9\textwidth]{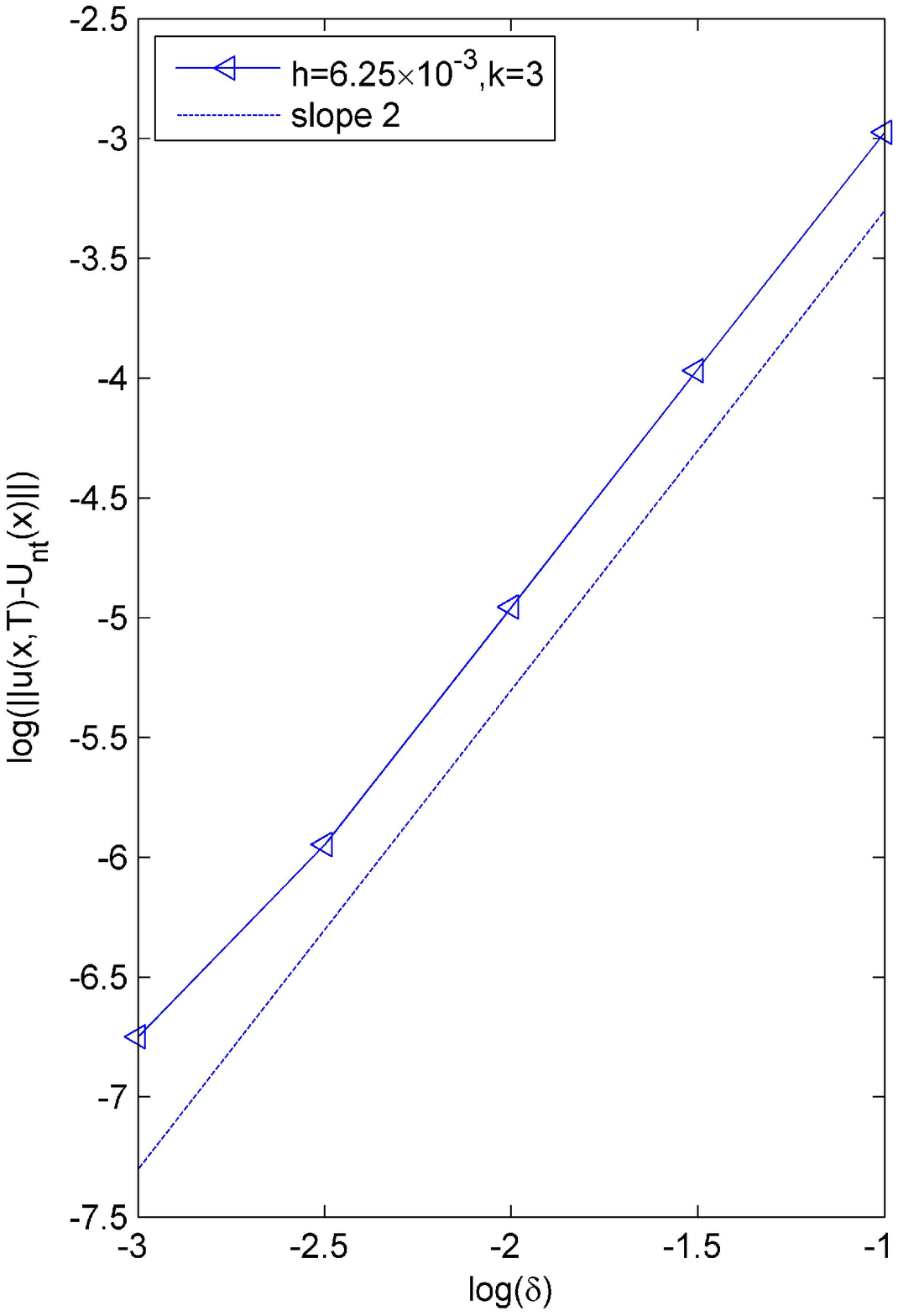}
\caption{Study of convergence for $\delta$, in Example 3.}\label{ex3_O_d}
\end{minipage}
\hfill
\begin{minipage}[t]{0.3\linewidth}
\centering
\includegraphics[width=0.9\textwidth]{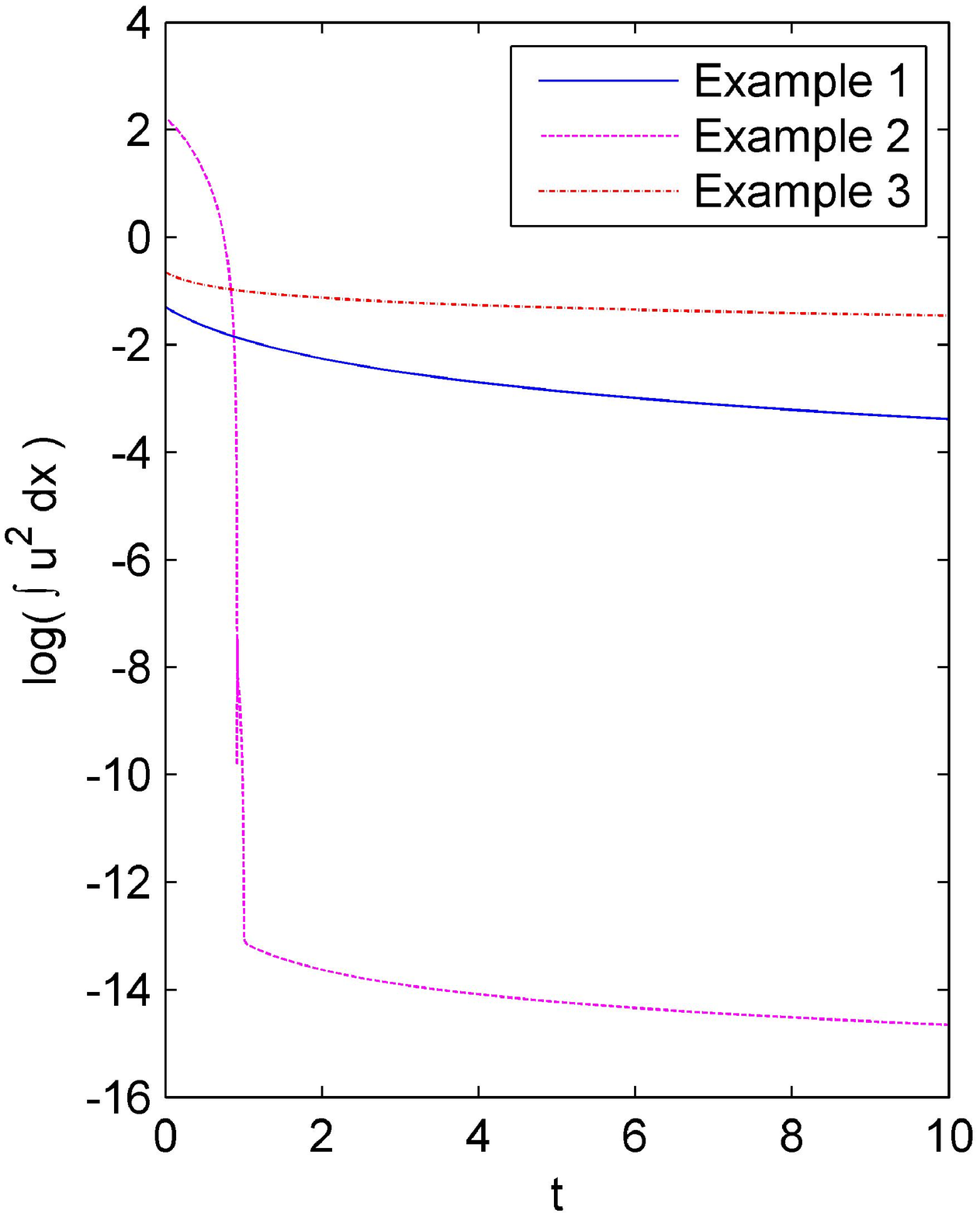}
\caption{Study of the asymptotic behaviour.}\label{comp_ass}
\end{minipage}
\end{figure}
\end{center}

\section{Conclusions}

We proved optimal rates of convergence for a linearised
Crank-Nicolson-Galerkin finite element method with piecewise polynomials of
arbitrary degree basis functions in space when applied to a degenerate
nonlocal parabolic equation. Some numerical experiments were presented,
considering different functions $f$ and exponent $\gamma$. The
numerical results are in agreement with the exact explicit solutions deduced,  and in accordance with the theoretical results.

\section*{Acknowledgements}

This work was partially supported by the research projects:\\ OE/MAT/UI0212/2014 - financed by FEDER through the - Programa Operacional Factores de Competitividade, FCT - Funda\c{c}\~ao para a Ci\^{e}ncia e a Tecnologia, Portugal and MTM2011-26119, MICINN, Spain.


\end{document}